\newcommand{\eps}{\varepsilon}
\newcommand{\B}{{\mathcal B}}
\newcommand{\N}{{\mathbb N}}
\newcommand{\C}{{\mathbb C}}
\newcommand{\R}{{\mathbb R}}
\newcommand{\tef}{transcendental entire function}
\newcommand{\sconn}{simply connected}
\theoremstyle{plain}
\newtheorem{theorem}{Theorem}[section]
\newtheorem{corollary}[theorem]{Corollary}
\newtheorem{lemma}[theorem]{Lemma}
\theoremstyle{definition}
\theoremstyle{remark}
\theoremstyle{problem}
\theoremstyle{example}
\begin{document}


\title[Baker's conjecture]{Baker's conjecture for functions with real zeros}

\author{D. A. Nicks}
\address{School of Mathematical Sciences\\
The University of Nottingham\\
University Park\\
Nottingham NG7 2RD\\
UK}
\email{Dan.Nicks@nottingham.ac.uk}

\author{P. J. Rippon}
\address{Department of Mathematics and Statistics \\
The Open University \\
   Walton Hall\\
   Milton Keynes MK7 6AA\\
   UK}
\email{Phil.Rippon@open.ac.uk}

\author{G. M. Stallard}
\address{Department of Mathematics and Statistics \\
The Open University \\
   Walton Hall\\
   Milton Keynes MK7 6AA\\
   UK}
\email{Gwyneth.Stallard@open.ac.uk}

\thanks{2010 {\it Mathematics Subject Classification.}\; Primary 30D05, Secondary 30D20, 37F10.\\The first author was supported by the EPSRC grant EP/L019841/1, and the last two authors were supported by the EPSRC grants EP/H006591/1 and EP/K031163/1.}


\keywords{entire function, Baker's conjecture, unbounded wandering domain, real zeros, minimum modulus, winding of image curves, extremal length, Laguerre--P\'olya class}


\begin{abstract}
Baker's conjecture states that a {\tef} of order less than $1/2$ has no unbounded Fatou components. It is known that, for such functions, there are no unbounded periodic Fatou components and so it remains to show that they can also have no unbounded wandering domains. Here we introduce completely new techniques to show that the conjecture holds in the case that the {\tef} is real with only real zeros, and we prove the much stronger result that such a function has no orbits of unbounded wandering domains whenever the order is less than~1. This raises the question as to whether such wandering domains can exist for \emph{any} \tef\;with order less than~1.

Key ingredients of our proofs are new results in classical complex analysis with wider applications. These new results concern: the winding properties of the images of certain curves proved using extremal length arguments, growth estimates for entire functions, and the distribution of the zeros of entire functions of order less than~$1$.
\end{abstract}

\maketitle

\section{Introduction}
\setcounter{equation}{0}
Let $f:\C\to \C$ be a {\tef} and denote by $f^{n},\,n=0,1,2,\ldots\,$, the $n$th iterate of~$f$. The {\it Fatou set} $F(f)$ is the set of points $z \in \C$ such that $(f^{n})_{n \in \N}$ forms a normal family in some neighborhood of $z$.  The complement of $F(f)$ is called the {\it Julia set} $J(f)$ of $f$. An introduction to the properties of these sets can be found in~\cite{wB93}.

Baker's conjecture, arising from his paper~\cite{iB81} in 1981, is that the Fatou set has no unbounded components whenever the order of the function is less than~$1/2$, or even whenever the function has order at most $1/2$, minimal type (see Section~2 for definitions). It is known~\cite{Z} that such functions have no unbounded periodic or preperiodic Fatou components but it remains open as to whether such a function can have an unbounded wandering domain, that is, an unbounded component $U$ of the Fatou set such that $f^n(U) \cap f^m(U) = \emptyset$ for $n \neq m$. Note that \cite{iB81} gives examples of functions of order $1/2$ with unbounded {\it periodic} Fatou components.

Many authors have shown that Baker's conjecture holds provided some regularity condition is imposed on the growth of the maximum modulus but, without any such condition, it is not even known whether the conjecture holds for all functions of order zero. The strongest results in this direction are given in~\cite{HM} and in~\cite{RS08}. A survey of earlier work on this conjecture appears in~\cite{H}. All these papers used properties of the minimum modulus to show that, under certain conditions, the images of certain curves must stretch radially.

The paper~\cite{RS13} introduced a new approach to the problem, showing that, for a certain class of functions, if these image curves do not stretch radially, then they must wind round the origin repeatedly. This led to the first result on Baker's conjecture for functions of positive order requiring no restriction on the regularity of the growth, namely, that the conjecture holds for a {\tef} of order less than~$1/2$ which is {\it real} (that is, it takes only real values on the real axis) and has only negative zeros.

This new approach to Baker's conjecture requires a detailed understanding of the influence of the zeros of the function on the images of curves. Even the apparently straightforward generalisation of allowing the zeros to lie anywhere on the real axis requires much more sophisticated arguments than were used in~\cite {RS13}, where repeated use was made of the fact that, when the zeros of $f$ are all negative, $|f(re^{i\theta})|$ is strictly decreasing for $0\le \theta\le \pi$, for any $r>0$.

In this paper we prove several new results in complex analysis which enable us to make this generalisation and, more surprisingly, enable us to show that such functions have no orbits of unbounded wandering domains whenever the order is less than 1.

\begin{theorem}\label{main}
Let~$f$ be a real {\tef} of order less than~1 with only real zeros. Then~$f$ has no orbits of unbounded wandering domains.
\end{theorem}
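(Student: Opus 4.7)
The plan is to argue by contradiction: assume that $f$ has an orbit $(U_n)_{n\ge 0}$ of unbounded wandering domains. Since every \mconn{} Fatou component of a {\tef} is a bounded \bwd{}, each $U_n$ is necessarily \sconn{}. Following the programme initiated in \cite{RS13}, the aim is to exploit the inclusion $f(U_n)\subset U_{n+1}$ together with the structural information carried by the Hadamard factorisation
\[
 f(z)=cz^m e^{az}\prod_k\left(1-\frac{z}{a_k}\right),\qquad a_k\in\R,
\]
(valid for a real {\tef} of order less than~$1$ with only real zeros) to derive a geometric contradiction.

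The central technical step is a \emph{stretching-or-winding dichotomy} for the image under $f$ of the upper half-circle $C_r=\{re^{i\theta}:0\le\theta\le\pi\}$: for a suitable sequence of radii $r\to\infty$, either \emph{(stretch)} $f(C_r)$ covers a very wide annulus about $0$, or \emph{(wind)} $f(C_r)$ winds many times about $0$ within a thinner annulus. The stretch alternative flows from a $\cos\pi\rho$-type minimum-modulus estimate that is particularly tractable here because each factor of the Hadamard product is real on the real axis, so $\log|f(re^{i\theta})|$ splits into an explicit sum indexed by the real zeros. The wind alternative is controlled by the new extremal-length estimates announced in the abstract. Once the dichotomy is set up, each alternative should be pushed to a contradiction using the fact that the image of any sub-arc of $C_r$ meeting $U_n$ must lie in the \sconn{} domain $U_{n+1}$: an arc covering a wide annulus about $0$ cannot lie in a \sconn{} domain avoiding $0$ (which eventually holds for $U_{n+1}$, since otherwise the orbit would be preperiodic rather than wandering), while an arc winding heavily about $0$ forces the existence of an annulus of large modulus in $\C\setminus U_{n+1}$ surrounding the origin, again incompatible with $U_{n+1}$ being a \sconn{} Fatou component.

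The principal obstacle, and the reason genuinely new tools are required beyond \cite{RS13}, is the winding case when $f$ has zeros on \emph{both} halves of the real axis. When all zeros are negative the factors $(1-z/a_k)$ force $\theta\mapsto|f(re^{i\theta})|$ to be strictly decreasing on $[0,\pi]$, and this monotonicity drives the whole argument in \cite{RS13}; but when positive zeros are present the two types of factors push $|f|$ in opposite directions and the monotonicity is lost. The role of the new extremal-length estimates is precisely to substitute for this lost monotonicity: they translate the winding of $f(C_r)$ into zero-counting data for $f$, which in turn is controlled by new distribution-of-zeros estimates for entire functions of order less than~$1$ (the third classical ingredient flagged in the abstract). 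I expect the weighing of the competing contributions of positive and negative zeros, both to the growth of $|f|$ along $C_r$ and to the winding number of $f(C_r)$ about~$0$, to be the technical heart of the argument. The jump from order $<1/2$ to order $<1$ is paid for by a sharper minimum-modulus input; the threshold $\rho=1$ is sharp for this method because beyond it the classical minimum-modulus estimates become vacuous, which both explains the hypothesis in Theorem~\ref{main} and indicates where the approach ceases to extend further.
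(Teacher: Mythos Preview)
Your broad outline --- dichotomy between radial stretching and winding, with the new extremal-length and zero-distribution results handling the winding branch when zeros lie on both half-axes --- matches the paper's strategy. But the way you close each branch of the dichotomy has genuine gaps.

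\textbf{The stretching branch.} You write that ``an arc covering a wide annulus about $0$ cannot lie in a \sconn{} domain avoiding $0$''. This is false: a half-plane not containing~$0$, or a spiral strip, is \sconn{}, avoids~$0$, and contains arcs crossing annuli of arbitrarily large modulus. The paper's contradiction in the stretching case is not a one-shot geometric obstruction. Instead, one fixes a curve $\gamma_0\subset U$ (not a half-circle $C_r$, which need not lie in $U$), and tracks sub-curves $\gamma_n\subset f^n(\gamma_0)$ through \emph{all} iterates. Repeated stretching forces $\sup_{\gamma_0}|f^n|/\inf_{\gamma_0}|f^n|\to\infty$, which contradicts the standard hyperbolic-distortion estimate (Lemma~\ref{dist}) valid on compact subsets of a \sconn{} escaping Fatou component. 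The iterative bookkeeping --- showing that the stretching factors $L_n$ stay uniformly bounded below while $r_n\to\infty$ --- is where most of the work in Section~3 goes, and it is absent from your sketch.

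\textbf{The winding branch.} Your proposed contradiction (``an annulus of large modulus in $\C\setminus U_{n+1}$ surrounding the origin'') does not follow from an arc in $U_{n+1}$ winding around~$0$, and in any case a \sconn{} domain can have such an annulus in its complement. The actual mechanism is simpler but relies crucially on the real symmetry $f(\bar z)=\overline{f(z)}$, which you do not invoke: if a sub-curve of $\gamma_n$ in the upper half-plane has image winding through at least $2\pi$, then together with its reflection one obtains a Jordan curve in $U_{n+1}$ surrounding~$0$; simple connectivity then forces $0\in U_{n+1}$, contradicting the assumption (available since the orbit is wandering) that $0\notin U_n$ for all~$n$.

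A minor point: for order $\rho<1$ the Hadamard product has genus~$0$, so there is no $e^{az}$ factor.
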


As stated earlier, it is known that functions of order at most 1/2, minimal type, have no unbounded periodic Fatou components. For such functions, the minimum modulus function is unbounded (see, for example,~\cite[p.274]{T}) and so the image of any unbounded continuum is also unbounded. Hence we have the following corollary to Theorem~\ref{main}.

\begin{corollary}\label{Baker}
Let $f$ be a real {\tef} of order at most 1/2, minimal type, with only real zeros. Then~$f$ has no unbounded Fatou components and hence Baker's conjecture holds.
\end{corollary}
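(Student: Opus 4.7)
The plan is to deduce the corollary directly from Theorem~\ref{main}, together with the two classical facts quoted in the paragraph preceding the statement: for a {\tef} $f$ of order at most $1/2$, minimal type, one has (i)~no unbounded periodic (and hence, by pullback, no unbounded preperiodic) Fatou components, by the result of Zheng~\cite{Z}, and (ii)~the minimum modulus $m(r,f):=\min_{|z|=r}|f(z)|$ tends to infinity as $r\to\infty$, so that the image of any unbounded continuum under $f$ is unbounded.

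I would argue by contradiction. Suppose that $f$ has an unbounded Fatou component~$U$. Being an unbounded, open, connected subset of $\C$, $U$ contains a curve tending to infinity, and along this curve $|f(z)|\geq m(|z|,f)\to\infty$ by fact~(ii). Hence $f(U)$ is an unbounded set, and the Fatou component $U_1$ containing $f(U)$ is itself unbounded. Since $U_1$ is again open, connected and unbounded, we may iterate the same argument to produce a sequence $U=U_0, U_1, U_2,\ldots$ of unbounded Fatou components with $f(U_n)\subset U_{n+1}$ for every $n\geq 0$.

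Fact~(i) now forbids $U$ from being periodic or preperiodic, for otherwise some $U_n$ would be a periodic Fatou component and would have to be bounded. Consequently $U$ is a wandering domain, and $(U_n)$ is an orbit of unbounded wandering domains. This contradicts Theorem~\ref{main}, since order at most $1/2$, minimal type, is a fortiori order less than~$1$ and the hypothesis that $f$ is real with only real zeros is inherited from the corollary. Therefore no such $U$ exists, which is Baker's conjecture in this setting.

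There is no serious obstacle to this argument: the corollary is essentially a repackaging of Theorem~\ref{main}, with unboundedness of the minimum modulus used to propagate unboundedness along the forward orbit, and Zheng's theorem used to upgrade any unbounded Fatou component to an unbounded wandering domain. The substantive content has already been carried by Theorem~\ref{main}.
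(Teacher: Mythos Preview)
Your approach is exactly that of the paper, but there is one slip worth fixing. You assert that for a {\tef} of order at most $1/2$, minimal type, the minimum modulus $m(r,f)\to\infty$ as $r\to\infty$, and then argue ``along this curve $|f(z)|\ge m(|z|,f)\to\infty$''. This is false: any such function must have infinitely many zeros (a zero-free entire function is $e^g$ with $g$ entire, and order below~$1$ forces $g$ constant), so $m(r,f)=0$ on an unbounded set of radii. What is true, and what the paper actually uses, is that $m(r,f)$ is \emph{unbounded}: there exist $r_n\to\infty$ with $m(r_n,f)\to\infty$. Since an unbounded connected set meets $C(r_n)$ for all large~$n$, you still get points $z_n\in U$ with $|f(z_n)|\ge m(r_n,f)\to\infty$, and hence $f(U)$ is unbounded. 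With this correction the rest of your argument goes through unchanged and matches the paper's reasoning.
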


The proofs of all earlier results on Baker's conjecture made crucial use of the very strong minimum modulus properties of functions of order less than 1/2, and the proof of Theorem~\ref{main} requires a number of new results concerning properties of functions of order less than 1. Note that there are examples of functions of order 1 with orbits of unbounded wandering domains -- for example, the function defined by $f(z) = z - 1 + e^{-z} + 2\pi i$ has this property as shown by Herman~\cite{mH84} -- but no examples are known of functions of order less than 1 which have this property. Our result suggests the following.

{\bf Question} \; Is there an example of a {\tef} $f$ of order less than~1 with an orbit of unbounded wandering domains?

This question adds a new perspective to the body of work seeking to identify classes of functions for which there are no wandering domains. This work began with the famous paper of Sullivan~\cite{dS82} which introduced the idea of quasiconformal deformations to the area and showed that rational functions have no wandering domains. Many subsequent papers show that Sullivan's techniques can be extended to a number of classes of {\tef}s and all papers ruling out wandering domains involve an analysis of the singular values of the function. Our result does not rule out bounded wandering domains, and indeed these can exist for functions of arbitrarily small growth; see~\cite{aH94}. It does, however, provide a new approach for ruling out orbits of unbounded wandering domains.

One of the most recent papers concerning the absence of wandering domains is~\cite{LH} by Mihaljevi\'c-Brandt and Rempe-Gillen, who prove that wandering domains do not exist for many functions in the Eremenko-Lyubich class $\B$ (consisting of {\tef}s for which the set of finite singular values is bounded). As an application, they show that there are no wandering domains of functions in class $\B$ which have order less than 1 and are real with all their zeros on the negative real axis. So, for example, the following families of functions have no wandering domains:
\[
f(z) = \lambda \cos \sqrt{az + b}, \quad f(z) = \lambda \frac{\sin \sqrt{az + b}}{\sqrt{az + b}},
\]
where $\lambda, a, b$ are real, with $a,\lambda\ne 0$.

Our result complements this by using a completely different approach to show, for example, that the following families of functions have no orbits of unbounded wandering domains:
\begin{equation}\label{examples}
f(z) = p(z) \cos \sqrt{az + b}, \quad f(z) = p(z) \frac{\sin \sqrt{az + b}}{\sqrt{az + b}},
\end{equation}
where $a, b$ are real, with $a\ne 0$, and $p(z)$ is a real polynomial with only real zeros. Such functions belong to class $\B$ only when $p(z)$ is a real constant. These classes include the function of order $1/2$ defined by $f(z) = (z/2) \cos \sqrt{z}$, which has an unbounded attracting invariant Fatou component but, by Theorem~\ref{main}, has no orbits of unbounded wandering domains.

\!{\it Remarks}\\
1. The examples in \eqref{examples} have only real zeros, all but finitely many of which lie on either the positive or the negative real axis. Nevertheless, these functions are not covered by the results in \cite{RS13}.

2. Theorem~\ref{main} does not exclude the possibility that~$f$ has an unbounded wandering domain whose forward orbit consists eventually of bounded Fatou components. However, the proof shows that for this class of functions such a wandering domain can occur only if~0 lies in the orbit of wandering domains. In fact it also shows that for these functions unbounded periodic Fatou components can only occur if~0 lies in the periodic cycle.

3. In~\cite{RS13} it was shown that the functions considered there have the property that the {\it escaping set}
\[
I(f)=\{z:f^n(z)\to\infty \mbox{ as } n\to\infty\}
\]
forms a connected set (with a structure called a spider's web) and so satisfies Eremenko's conjecture~\cite{E} that for any transcendental entire function~$f$ the components of $I(f)$ are all unbounded, in a strong way. The winding results obtained in this paper allow that result to be generalised, but such generalisations will be left until later work.

The structure of this paper is as follows. In Section~2, we state two new results, Theorems~\ref{main1} and~\ref{main2}, concerning the winding of image continua and deduce from these a further result, Theorem~\ref{main3}, which shows that the images of certain continua must stretch radially or wind round the origin. In Section~3 we deduce Theorem~\ref{main} from Theorem~\ref{main3}.

We prove Theorems~\ref{main1} and~\ref{main2} in Sections~5 and~7, respectively. For both these theorems we need several results that relate the behaviour of the maximum modulus and the minimum modulus of~$f$, which are deduced in Section~4 from a fundamental estimate of Beurling. Theorem~\ref{main1} is proved using an extremal length argument applied to $\log f$, which is univalent in the upper half-plane for the functions considered here and in fact for all functions in the Laguerre--P\'olya class. Theorem~\ref{main2} is proved using a new result concerning the location of the zeros of functions of order less than~1,  which may be of independent interest. This result is proved in Section~6 using estimates for the minimum modulus due to Cartwright. Note that Theorem~\ref{main2} is needed only for the case of functions of order at least~$1/2$ with $0\notin F(f)$.

\section{Winding of image curves}
\setcounter{equation}{0}
We begin this section by stating two new theorems which show that, for entire functions of the type covered by Theorem~\ref{main}, the images of certain continua~$\gamma$ must wind many times round the origin. These play a key role in our proof of Theorem~\ref{main}. We note that for our present purposes it would be sufficient to prove these results in the case when~$\gamma$ is a curve rather than a continuum, but we state these theorems for a continuum because of possible future applications.

We first introduce some notation. We define the {\it maximum} and {\it minimum modulus} of an entire function~$f$ by
\[
M(r)= M(r,f) = \max_{|z| = r}|f(z)|\, \mbox{ and }\, m(r)= m(r,f) = \min_{|z|=r} |f(z)|, \mbox{ for } r>0,
\]
and the order $\rho=\rho(f)$ of a {\tef} $f$ by
\[
\rho=\limsup_{r\to\infty}\frac{\log\log M(r,f)}{\log r}\,.
\]
Defining $\sigma$ by
\[
 \sigma =  \limsup_{r \to \infty}\, \frac{\log
 M(r)}{r^{\rho}},
\]
we say that the growth of $f$ is of {\it minimal type} if $\sigma = 0$,
{\it mean type} if $0 < \sigma < \infty$ and {\it maximal type}
if $ \sigma = \infty$.

We recall the following version of Hadamard convexity: for a {\tef} $f$, there exists a constant $R_0=R_0(f)>0$ such that
\begin{equation}\label{Had}
M(r^c)\ge M(r)^c, \;\;\text{for } r\ge R_0,\; c>1;
\end{equation}
see~\cite[Lemma~2.1]{RS08}. Throughout the paper $R_0$ denotes the constant in \eqref{Had}. We also recall the well-known property of any {\tef} that
\begin{equation}\label{maxmod}
\frac{\log M(r)}{\log r}\to \infty\quad\text{as }r \to \infty.
\end{equation}

Next, for $r>0$, we write $C(r) = \{z: |z| = r\}$ and, for $0<r_1<r_2$, we write
\[
A(r_1,r_2)=\{z:r_1<|z|<r_2\}\;\;\text{and}\;\;\overline A(r_1,r_2)=\{z:r_1\le |z|\le r_2\}.
\]

Finally we introduce notation associated with the winding of image curves. If $\gamma$ is a plane curve with an associated parametrisation and $\gamma$ meets no zeros of~$f$, then we denote the net change in the argument of $f(z)$ as $z$ traverses $\gamma$ by $\Delta\!\arg f(\gamma)$.

If $\gamma$ is a continuum (that is, a nontrivial, compact, connected subset of $\C$) having the property that all the zeros of~$f$ lie in the unbounded complementary component of $\gamma$, and if $z_0,z'_0$ is any pair of distinct points in~$\gamma$, then we denote the net change in the argument of $f(z)$ as $z$ traverses~$\gamma$ from $z_0$ to $z'_0$ by $\Delta\!\arg (f(\gamma);z_0,z'_0)$. This quantity is defined by choosing
\begin{itemize}
\item
any {\sconn} domain, $G$ say, that contains $\gamma$ but no zeros of $f$,
\item a branch, $g$ say, of $\log f$ in~$G$,
\end{itemize}
and putting
\begin{equation}\label{Deltadef}
\Delta\!\arg (f(\gamma);z_0,z'_0)=\Im (g(z'_0))-\Im (g(z_0)).
\end{equation}
Note that in the case when $\gamma$ is a simple arc with endpoints $z_0$ and $z'_0$ that meets no zeros of~$f$, then we have $\Delta\!\arg (f(\gamma);z_0,z'_0)=\Delta\!\arg (f(\gamma))$.

We now state our first result concerning the winding of the images of certain continua under a function~$f$ that satisfies the hypotheses of Theorem~\ref{main}. Roughly speaking, this result states that if~$|f|$ is neither too large nor too small on a continuum~$\gamma$ that lies in the upper half-plane and crosses a sufficiently thick annulus, then the image of~$\gamma$ must wind many times round~0.

\begin{theorem}\label{main1}
Let $f$ be a real transcendental entire function of order less than~2 with only real zeros, and let~$s$ and~$a$ be positive real numbers such that
\begin{equation}\label{s>}
s\ge R_0 \quad\text{and}\quad \log s \ge \frac{64}{a^2} + \frac{80\pi}{a}\,.
\end{equation}
If $\gamma$ is any continuum in $\{z:\Im z \ge 0\}$ that meets both $C(s)$ and $C(s^{1+a})$ with
\begin{equation}
1/M(s) \le |f(z)| \le M(s), \mbox{ for } z\in\gamma, \label{|f|<M}
\end{equation}
then there exist a continuum $\Gamma\subset\gamma\cap \overline{A}(s,s^{1+a})$ and $z_0,z'_0\in\Gamma$ such that
\[ \Delta\!\arg(f(\Gamma);z_0,z'_0) \ge \frac{1}{10\pi}\log M(s) \log s^a. \]
\end{theorem}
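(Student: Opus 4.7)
The structural fact I will exploit is that for real entire functions of order less than~2 with only real zeros, i.e.~for functions in the Laguerre--P\'olya class, $g:=\log f$ is univalent on the upper half-plane $H$ (this follows from the Hadamard factorisation, which shows $\Im(f'/f)<0$ in $H$, combined with the Noshiro--Warschawski univalence criterion on the convex domain $H$). First I would extract from $\gamma$ a sub-continuum $\Gamma\subseteq \gamma\cap\overline A(s,s^{1+a})$ joining $C(s)$ to $C(s^{1+a})$ inside $\overline H$, using standard boundary-bumping for continua meeting the two components of $\partial\overline A(s,s^{1+a})$. I then transport to a rectangle via $w=\log z$: the upper half-annulus $\Omega:=A(s,s^{1+a})\cap H$ is mapped conformally by $\log$ onto the rectangle
\[
 R:=(\log s,(1+a)\log s)\times(0,\pi),
\]
$\Gamma$ lifts to a continuum $\tilde\Gamma\subset\overline R$ joining the two vertical sides of $R$, and the composition $h(w):=g(e^w)=\log f(e^w)$ is univalent on the interior of $R$. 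The hypothesis \eqref{|f|<M} becomes the two-sided bound $|\Re h|\leq K:=\log M(s)$ on $\tilde\Gamma$, and via the definition \eqref{Deltadef} of $\Delta\!\arg$ the desired conclusion is equivalent to producing $w_0,w_0'\in\tilde\Gamma$ with $\Im h(w_0')-\Im h(w_0)\geq Ka\log s/(10\pi)$.

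\textbf{Core step: extremal length.} The modulus of the family $\mathcal F$ of curves in $R$ joining the bottom side $\{v=0\}$ to the top side $\{v=\pi\}$ equals $a\log s/\pi$ (so the extremal length is $\pi/(a\log s)$). Because $\tilde\Gamma$ joins the two vertical sides of $R$, it separates top from bottom, hence every curve in $\mathcal F$ crosses $\tilde\Gamma$. By conformal invariance, the image family $h(\mathcal F)$ in $h(R)$ has the same modulus and every image curve crosses $h(\tilde\Gamma)$. Assume for contradiction that the oscillation of $\Im h$ along $\tilde\Gamma$ is at most $N:=Ka\log s/(10\pi)$. Then $h(\tilde\Gamma)$ is trapped in the box $B:=[-K,K]\times[c,c+N]$ for some $c\in\R$. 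I would then apply the series law for extremal length,
\[
 \lambda\bigl(h(\{v=\pi\}),h(\{v=0\})\bigr)\;\geq\; \lambda\bigl(h(\{v=\pi\}),B\bigr)+\lambda\bigl(B,h(\{v=0\})\bigr),
\]
with the left-hand side equal to $\pi/(a\log s)$, and construct an admissible test metric for each of the two right-hand extremal length problems, supported on thin vertical corridors of $\Re$-width $\approx 2K$ connecting $B$ upward to $h(\{v=\pi\})$ and downward to $h(\{v=0\})$. If $N$ is as small as assumed, these corridor estimates force the right-hand side to exceed $\pi/(a\log s)$, yielding the contradiction. The numerical threshold $\log s\geq 64/a^2+80\pi/a$ in \eqref{s>} should emerge precisely as what is needed to absorb the error terms generated by the corridor construction into the leading constant $1/(10\pi)$.

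\textbf{Main obstacle.} The chief difficulty is that $h(R)$ is an intricate, unbounded simply-connected domain. On the top and bottom sides of $R$ (corresponding via $\exp$ to the real intervals $[s,s^{1+a}]$ and $[-s^{1+a},-s]$), $\Im h$ lies in $\pi\Z$, but at each real zero of $f$ in those intervals $\Re h\to -\infty$, so $h(R)$ has long thin spikes escaping to the left. On the right vertical side, Hadamard convexity \eqref{Had} allows $\Re h$ to reach $\log M(s^{1+a})\geq (1+a)K$, so $h(R)$ also stretches far to the right. Ensuring that curves in $h(\mathcal F)$ really are forced to pick up a substantial contribution from the test metric, rather than escaping out through one of the spikes or through the far right, is the delicate technical heart of the proof. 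This is where the Beurling-type comparison estimates between $M(r)$ and $m(r)$ developed in Section~4 of the paper enter: they quantitatively restrict how far $h(R)$ can stretch over short $r$-intervals and so underpin the corridor construction in the extremal length estimate.
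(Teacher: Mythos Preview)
Your overall framework is right: the conformality of $\log f$ on the upper half-plane together with an extremal length comparison is exactly how the paper proceeds. But the specific extremal-length set-up you propose does not work, and the obstacle you identify in your last paragraph is fatal to it rather than merely technical.

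The difficulty is not that the spikes of $h(R)$ are hard to control with Beurling estimates; it is that they cannot be controlled at all. At each real zero of $f$ in $(s,s^{1+a})$ one has $\Re h\to -\infty$, so $h(\{v=0\})$ and $h(\{v=\pi\})$ genuinely reach arbitrarily far to the left. Your series inequality would require a lower bound for $\lambda\bigl(B,h(\{v=0\})\bigr)$, but a curve from $B$ can reach $h(\{v=0\})$ by moving only a bounded distance (indeed, $h(\{v=0\})$ has imaginary part in $\pi\Z$ and real part taking all values up to $\log M(s^{1+a})$, so it may pass right next to, or through, the box $B$). No corridor metric will give a useful lower bound here, and the Beurling results in Section~4 say nothing about this: they bound how \emph{small} $m(r)$ can be over an interval in terms of the growth of $M$, but near an individual zero $\log|f|$ is still $-\infty$.

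The paper sidesteps this by changing the pair of sets in the extremal-length problem. Instead of connecting the two real intervals, it introduces the level set $\beta=\partial G_1\cap\overline\Omega$, where $G_1$ is the component of $\{|f|<M(s^{1+a/2})\}$ containing $\gamma$, and works in the half-annulus $\Omega=A(s^{1+a/2},s^{1+a})\cap\{\Im z>0\}$. One takes $\Gamma\subset\gamma\cap\overline A(s^{1+a/2},s^{1+a})$ joining the two boundary circles, and considers the family $\Delta$ of arcs in $\Omega$ joining $\Gamma$ to $\beta$. Since both $\Gamma$ and $\beta$ cross $\Omega$ radially (the latter by symmetry of $G_1$), every arc joining the two real boundary intervals of $\Omega$ contains a subarc in $\Delta$, giving the upper bound $\lambda_\Omega(\Delta)\le 2\pi/(a\log s)$. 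The payoff is that under $F=\log f$ the set $F(\beta)$ lies on the \emph{vertical line} $\{\Re w=\log M(s^{1+a/2})\}$, while $F(\Gamma)$ lies in the box $\{|\Re w|\le\log M(s),\ \theta_0\le\Im w\le\theta_0'\}$. A single rectangular test metric then gives a clean lower bound for $\lambda_{F(\Omega)}(F(\Delta))$ in terms of $\theta=\theta_0'-\theta_0$ and $\log(M(s^{1+a/2})/M(s))$, with no spikes to worry about. The Beurling estimate from Section~4 is used only once, and for a different purpose: to show $\log M(s^{1+a/2})>2\log M(s)$, which is what makes the final algebra close with the constant $1/(10\pi)$ under the hypothesis \eqref{s>}.
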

%

Theorem~\ref{main1} is a generalisation of a similar result proved in~\cite[Theorem 2.1]{RS13} in the case that all the zeros are negative and the continuum~$\gamma$ is a level curve of~$f$. The proof here is, however, very different to the one given in that paper since we can no longer assume that $|f(re^{i\theta})|$ is strictly decreasing for $0 \leq \theta \leq \pi$, for any $r>0$, and it uses extremal length together with an estimate of Beurling. We describe the estimate of Beurling and some of its consequences in Section~4, and recall the notion of extremal length and prove Theorem~\ref{main1} in Section~5.

Our next theorem concerning the winding of the images of certain continua under a function~$f$ satisfying the hypotheses of Theorem~\ref{main} is completely new and is required to deal with a situation that we do not need to consider for functions of order less than~$1/2$; see \cite[Theorem~2.2]{RS13}. Roughly speaking, the following theorem states that if~$|f|$ is not too large on a continuum~$\gamma$ that lies in the upper half-plane and crosses a sufficiently thick annulus, and~$f$ takes a very small value at a point on~$\gamma$ near the outer edge of the annulus, then the image of~$\gamma$ must wind many times round~0.

\begin{theorem}\label{main2}
Let $f$ be a real {\tef} of order $\rho <  1$ with only real zeros. There exist $K_1 = K_1(\rho) > 2$ and $R_1 = R_1(f) \geq R_0$ such that, if
\[
 L \geq K_1, \; s^{1/L} \geq R_1
\]
and $\gamma$ is any continuum in $\{z: \Im z \geq 0\}$ that meets both $C(s^{1/L})$ and $C(s)$ with
\begin{equation}\label{gamma-s}
  |f(z)| < M(s^{1/L}), \mbox{ for } z \in \gamma,
\end{equation}
and
\[
|f(z')| < \frac{1}{M(s^{1/L^{1/8}})}, \mbox{ for some } z' \in \gamma \cap \{z: s^{1/L^{1/8}} \leq |z| \leq s\},
\]
then there exist a continuum $\Gamma\subset \gamma\cap\overline A(s^{1/L},s)$ and $z_0, z'_0\in\Gamma$ such that
\[
\Delta\!\arg(f(\Gamma);z_0,z'_0) \geq  \log M(s^{1/L})\,.
\]
\end{theorem}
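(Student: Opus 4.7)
Proof plan. The strategy combines three ingredients: Cartwright's minimum modulus estimates for functions of order less than one (to be developed in Section~4), a new zero-location result for such functions (to be proved in Section~6), and the univalence of $\log f$ on the upper half-plane, which is valid for any function in the Laguerre--P\'olya class. The overall idea is to use the anomalous smallness of $|f(z')|$ to force $f$ to have many real zeros clustered near~$|z'|$, and then to convert this multiplicity of zeros into a large argument change along a suitable sub-continuum of~$\gamma$.

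First I would show that $z'$ must lie very close to a real zero of~$f$. Cartwright's estimate gives, outside an exceptional set of radii of finite logarithmic measure, a lower bound of the form $\log m(r,f)\ge -C(\rho)\log M(r,f)$. Since $s^{1/L^{1/8}}=(s^{1/L})^{L^{7/8}}$, Hadamard convexity \eqref{Had} yields
\[\log|f(z')| < -\log M(s^{1/L^{1/8}})\le -L^{7/8}\log M(s^{1/L})\]
for $s^{1/L}\ge R_0$, so $|f(z')|$ is far below any permitted lower bound on $m(|z'|,f)$, and $|z'|$ must lie in the exceptional set. The Laguerre--P\'olya structure of~$f$ forces this set to consist of small disks around the (real) zeros of~$f$. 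The new zero-location theorem of Section~6 sharpens this qualitative observation quantitatively: using the Laguerre--P\'olya factorisation and the upper bound $|f|\le M(s^{1/L})$ on~$\gamma$, it produces an interval $I\subset\R$ at modulus comparable to~$|z'|$ containing at least $N\ge\log M(s^{1/L})/\pi$ real zeros of~$f$.

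Next I would select a sub-continuum $\Gamma\subset\gamma\cap\overline A(s^{1/L},s)$ still joining the two boundary circles and passing through~$z'$, and choose endpoints $z_0,z'_0\in\Gamma$ whose real parts straddle~$I$. Using the factorisation $f(z)=cz^m\prod_j(1-z/a_j)$, the quantity $\Delta\arg(f(\Gamma);z_0,z'_0)$ decomposes as $m\,\Delta\arg(z;z_0,z'_0)$ plus $\sum_j \Delta\arg(1-z/a_j;z_0,z'_0)$. For each real zero $a_j$ lying strictly between $\Re z_0$ and $\Re z'_0$, the function $\arg(z-a_j)$ swings from near~$0$ to near~$\pi$ as $z$ traverses $\Gamma\subset\{\Im z\ge 0\}$ from one side of~$a_j$ to the other, contributing approximately $\pi$ of a definite sign; the univalence of $\log f$ on the upper half-plane ensures that these contributions cannot cancel by double counting. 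Summing over the $N$ zeros in~$I$ between $\Re z_0$ and~$\Re z'_0$ yields
\[\Delta\arg(f(\Gamma);z_0,z'_0)\ge \pi N\ge \log M(s^{1/L}).\]

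The main obstacle is Step~1: producing a sharp quantitative lower bound $N\ge\log M(s^{1/L})/\pi$ uniformly in~$s$ and $L\ge K_1(\rho)$ is exactly the content of the new theorem of Section~6 and occupies the technical heart of the proof, requiring careful combination of the Cartwright estimates with the Laguerre--P\'olya factorisation. A secondary difficulty is ensuring that the real parts of the chosen endpoints of $\Gamma$ genuinely straddle enough zeros of~$I$; this will dictate the size of the constants $K_1(\rho)$ and $R_1(f)$, which must be chosen large enough to control both the Cartwright exceptional set and the geometry of $\gamma$ inside the annulus $\overline A(s^{1/L},s)$.
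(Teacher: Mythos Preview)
Your overall architecture---locate many real zeros in an interval $I$ near $|z'|$, then convert these into a large change of $\arg f$ along a sub-continuum of $\gamma$---matches the paper's. The zero-counting step is essentially what the paper does (its Lemma~\ref{zeros}, built on Theorem~\ref{main2a}), and your identification of this as the technical heart is accurate.

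The genuine gap is in your conversion step. You claim that for each zero $a_j\in I$ lying between $\Re z_0$ and $\Re z'_0$, the term $\Delta\arg(z-a_j;z_0,z'_0)$ is ``approximately $\pi$''. This is false without additional information: if $z_0$ and $z'_0$ both have imaginary part large compared with their horizontal distance from $a_j$, then $\arg(z_0-a_j)$ and $\arg(z'_0-a_j)$ are both close to $\pi/2$ and the contribution is negligible. Nothing in the hypotheses forces $\gamma$ to dip close to the real axis near $I$, so you cannot bound these individual terms from below. The appeal to univalence of $\log f$ does not repair this; univalence controls injectivity, not the size of $\Im(\log f)$ along an arbitrary continuum in the upper half-plane.

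The paper avoids this difficulty by a different, more topological mechanism. From each zero $a_j\in I$ there emanates into the upper half-plane an unbounded pre-image of the positive real axis (a curve on which $\arg f$ is a fixed multiple of $2\pi$). These curves must exit a simply connected region $U_I$ bounded by pieces of $\gamma$, the real axis, $C(s^{1/L})$, and the vertical line through $z'$. The paper first rules out exit through the vertical line using the barrier $m(s'')>1/M(s^{1/L^{1/8}})$ supplied by Lemma~\ref{Be1}. It then uses an extremal length distortion estimate (Lemma~\ref{long-thin}) applied to $\log f(e^z)$ to show that at most a small fraction of these pre-images can exit through $C(s^{1/L})$. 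Hence most of them must cross $\gamma$, and since distinct pre-images carry distinct values of $\arg f$ differing by multiples of $2\pi$, this immediately yields the required lower bound on $\Delta\arg(f(\Gamma);z_0,z'_0)$---with no need to control the height of $\gamma$ above the real axis.
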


In order to prove Theorem~\ref{main2} we obtain the following result, which gives a fairly precise statement about the location of the zeros of an entire function of order less than~1 whose minimum modulus is relatively small on a long interval, and which may be of independent interest. Note that this result does not hold in the case $\alpha\ge 1$, as shown by the exponential function.

\begin{theorem}\label{main2a}
Let $f$ be a {\tef} with $f(0) = 1$. Suppose that, for some $R > e^{28}$, there exists $0 < \alpha < 1$ such that
\[
 \log M(r) \leq r^{\alpha}, \mbox{ for } r \geq 3R^{1/(1-\alpha)}.
\]
If
\[
\log m(r) \leq \frac12 \log M(r), \mbox{ for } r \in (R/4,R/2),
\]
then
\[
n(R^{1/(1-\alpha)}) - n(R/e^{28}) \geq \frac{\log M(R)}{28} - \frac{3}{1 - \alpha},
\]
where $n(r)$ denotes the number of zeros of~$f$ in $\{z: |z| \le r\}$, counted according to multiplicity.
\end{theorem}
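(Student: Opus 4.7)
The strategy is to extract a lower bound on the number of zeros of $f$ in the annulus $(R/e^{28}, R_1]$ (where $R_1 := R^{1/(1-\alpha)}$) from the hypothesis that the minimum modulus $m(r)$ is systematically smaller than $M(r)^{1/2}$ throughout the long interval $(R/4, R/2)$. For entire functions of order less than one, Cartwright's minimum-modulus theory guarantees that such sustained smallness can only occur in the presence of many zeros nearby.

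The growth condition $\log M(r) \leq r^\alpha$ for $r \geq 3R_1$, combined with $f(0)=1$, forces $f$ to have order at most $\alpha<1$, so $f$ admits a genus-zero canonical product representation $f(z) = \prod_n (1-z/z_n)$. Split $f = P_1 \cdot P_2 \cdot Q$ into canonical products over the zeros of modulus in $[0, R/e^{28}]$, in $(R/e^{28}, R_1]$ and in $(R_1, \infty)$ respectively, so that the polynomial $P_2$ has degree $n^* = n(R_1) - n(R/e^{28})$, which is the quantity to be bounded from below. Next, one controls the outer and inner factors on $|z|\leq R$: for $Q$, the growth bound at $r=3R_1$ combined with Jensen's formula and an integration-by-parts estimate gives $\sum_{|z_n|>R_1} 1/|z_n| = O(1/((1-\alpha)R))$ and hence $|\log|Q(z)||\leq 3/(1-\alpha)$; for $P_1$, every contributing zero satisfies $|z|/|z_n|\geq e^{28}/4$ on $|z|\geq R/4$, so expanding $\log|1-z/z_n|$ around $\log(|z|/|z_n|)$ and summing yields an essentially $z$-independent expression $\log|P_1(z)| = N(R/e^{28}) + 28\,n(R/e^{28}) + O(n(R/e^{28}))$ throughout $R/4\leq |z|\leq R$.

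Evaluating the factorisation at the max-modulus point of $|f|$ on $|z|=R$ and using the above estimates gives the lower bound $\log M(R, P_2) \geq \log M(R) - 28\,n(R/e^{28}) - N(R/e^{28}) - O(1/(1-\alpha))$, while the factor-wise bound $|1-z/z_n|\leq 1+e^{28}$ for $|z|=R$ and $|z_n|>R/e^{28}$ yields the elementary upper bound $\log M(R, P_2)\leq 28\,n^*$. Analogously, at the minimum-modulus point $z_r$ of $|f|$ on each circle $|z|=r\in (R/4, R/2)$, the hypothesis together with the factor estimates forces a strong upper bound on $\log|P_2(z_r)|$, involving the same ``penalty'' $28\,n(R/e^{28})+N(R/e^{28})$. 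A Cartwright-type minimum-modulus estimate applied to $P_2$ (proved in Section~6 using Cartwright's classical techniques for canonical products of order less than one, whose zeros lie in a prescribed annulus) then provides a matching lower bound on $\log|P_2(z_r)|$ in terms of $n^*$, valid for most $r$ in the interval. Comparing this lower bound with the hypothesis-driven upper bound, in conjunction with the two bounds on $\log M(R, P_2)$, cancels the $28\,n(R/e^{28})+N(R/e^{28})$ term and produces $n^* \geq \log M(R)/28 - 3/(1-\alpha)$.

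The main obstacle is precisely the Cartwright-type estimate invoked at the end: one needs a quantitative lower bound on the minimum modulus of $P_2$ whose constants match those in the conclusion. The numerical constant $28$ in the denominator reflects the logarithmic width of the annulus between $R/e^{28}$ and $R$ and enters through the factor-wise bound $\log(1+e^{28})\leq 28+o(1)$ on $|z|=R$, whereas the error $3/(1-\alpha)$ is forced by the best available estimate for the contribution of the distant zeros forming $Q$. Getting both constants to line up---so that the ``penalty'' terms cancel exactly and the exceptional set in $r$ from Cartwright's theorem does not fill the whole interval $(R/4, R/2)$ of logarithmic measure $\log 2$---is the main technical task.
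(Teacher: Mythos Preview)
Your factorisation approach is plausible in outline but takes a substantially more laborious route than the paper, and the constant-matching you flag as the ``main technical task'' is exactly what the paper's method sidesteps.

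The paper never factorises $f$. Instead it works directly with the integrated counting functions
\[
N(r) = \int_0^r \frac{n(t)}{t}\,dt, \qquad Q(r) = r\int_r^\infty \frac{n(t)}{t^2}\,dt,
\]
and invokes two off-the-shelf Cartwright-type inequalities for $f$ itself (Lemma~\ref{L33} and Corollary~\ref{min}): $\log M(R) \le N(R) + Q(R)$, and $\log m(r) > N(R) - 6Q(R)$ for all $r \le R/2$ outside an exceptional set of total length at most $R/8$. Since that exceptional set cannot cover $(R/4, R/2)$, the hypothesis gives $\tfrac12\log M(R) \ge N(R) - 6Q(R)$; combined with the first inequality this yields $Q(R) \ge \tfrac{1}{14}\log M(R)$. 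The growth bound then gives $Q(R) \le n(R^{1/(1-\alpha)}) + 3/(1-\alpha)$ directly (the same tail estimate you obtain for your factor $Q$), and splitting $\tfrac{1}{14}\log M(R)$ as $\tfrac{1}{28}\log M(R) + \tfrac{1}{28}\log M(R)$ together with $n(R/e^{28}) \le \tfrac{1}{28}\log M(R)$ from Jensen finishes the proof in a few lines.

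What your approach buys is a more explicit picture of where each contribution comes from; what it costs is the pointwise control of $P_1$, $P_2$, $Q$ separately. In particular, your claim that $\log|P_1(z)|$ is ``essentially $z$-independent'' on $R/4 \le |z| \le R$ hides an error of order $(\log 4)\,n(R/e^{28})$, which by Jensen can be as large as $\tfrac{\log 4}{28}\log M(R)$, so the penalty terms do not cancel \emph{exactly}; and the Cartwright-type lower bound you need for $m(r,P_2)$ with the right constants is not supplied (the paper applies Cartwright's estimate to $f$, not to any factor). These are precisely the places where the constants $28$ and $3/(1-\alpha)$ have to be earned, and the paper's route via $N$ and $Q$ earns them automatically because the cancellation you are aiming for is built into the subtraction of the two integrated inequalities.
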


We prove Theorem~\ref{main2a} in Section~6 and then Theorem~\ref{main2} in Section~7.

We conclude this section by using Theorems~\ref{main1} and~\ref{main2} to prove the following theorem showing that the images of certain continua must stretch radially or wind around the origin. This result plays a key role in the proof of Theorem~\ref{main}.

\begin{theorem}\label{main3}
Let~$f$ be a real {\tef} of order $\rho < 1$ with only real zeros. Let
\begin{equation}\label{condK2}
K_2=K_2(\rho)=\max\{K_1,2^{8}\},
\end{equation}
where $K_1=K_1(\rho)$ is the constant in Theorem~\ref{main2}, and~$R_2=R_2(f)$ satisfies
\begin{equation}\label{condR2}
R_2\ge \max\{R_1,\exp(320)\}\quad\text{and}\quad \log M(R_2) \ge 2\pi,
\end{equation}
where $R_1=R_1(f)$ is the constant in Theorem~\ref{main2}. If $L$ and $t$ satisfy
\begin{equation}\label{LR1}
L \geq K_2, \; t^{1/L} \ge R_2,
\end{equation}
$\gamma$ is any continuum in $\{z: \Im z \geq 0\}$ that lies in $\overline{A}(t^{1/L}, t)$ and meets both $C(t^{1/L})$ and $C(t)$, and
\[
 S = \max_{z \in \gamma} |f(z)|,
\]
then at least one of the following must hold:
\begin{itemize}
\item[(1)] $f(\gamma)$ meets both $C(S^{1/(L+2)})$ and $C(S)$, and $S \ge M(t^{1/L})$;

\item[(2)] $f(\gamma)$ meets both $C(S^{1/(L(1-\eps))})$ and $C(S)$, and
\[S = M(t^{1-\eps}), \mbox{ where } 0\le \eps \leq \frac{10}{\sqrt{\log t}}\,;\]

\item[(3)] there exist a continuum $\Gamma \subset \gamma$ and $z_0, z'_0\in\Gamma$ such that
\[
\Delta\!\arg(f(\Gamma);z_0,z'_0) \geq  \log M(t^{1/L}) \ge 2\pi.
\]
\end{itemize}
\end{theorem}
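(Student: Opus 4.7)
The plan is to carry out a case analysis on $S=\max_{z\in\gamma}|f(z)|$, combining Theorems~\ref{main1} and~\ref{main2} with elementary applications of Hadamard convexity~\eqref{Had} and connectivity of $f(\gamma)$. Let $r_S$ denote the unique radius satisfying $M(r_S)=S$, well-defined by continuity of $M$ and the bound $S\leq M(t)$. Note that~\eqref{LR1} and~\eqref{condR2} give $\log t\geq 320L\geq 81920$, which will be used freely when verifying growth conditions.

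\emph{Case I: $S<M(t^{1/L})$.} The goal is conclusion~(3). If $|f|\geq 1/M(t^{1/L})$ throughout $\gamma$, apply Theorem~\ref{main1} with $s=t^{1/L}$ and $a=L-1$: the size hypothesis $\log s\geq 64/a^2+80\pi/a$ follows easily from $L\geq K_2\geq 2^8$ and $\log s\geq 320$, and the winding bound $(\log M(t^{1/L}))(L-1)(\log t)/(10\pi L)$ is much larger than $\log M(t^{1/L})$. If instead $|f(z')|<1/M(t^{1/L})$ for some $z'\in\gamma$, split on whether such a $z'$ can be chosen with $|z'|\geq t^{1/L^{1/8}}$ and $|f(z')|<1/M(t^{1/L^{1/8}})$: in the affirmative case, Theorem~\ref{main2} applied with $s=t$ gives~(3) directly. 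In the negative case, $|f|\geq 1/M(t^{1/L^{1/8}})$ holds on $\gamma\cap\overline A(t^{1/L^{1/8}},t)$, and we instead apply Theorem~\ref{main1} with $s=t^{1/L^{1/8}}$ and $a=L^{1/8}-1$ to a crossing subcontinuum of $\gamma$ in this smaller annulus.

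\emph{Case II: $S\geq M(t^{1/L})$.} Then $r_S\geq t^{1/L}$; write $r_S=t^{1-\eps}$ with $\eps\in[0,1-1/L]$, and pick $z_1\in\gamma\cap C(t^{1/L})$ so that $|f(z_1)|\leq M(t^{1/L})$. If $\eps\leq 10/\sqrt{\log t}$, Hadamard convexity~\eqref{Had} applied with base $t^{1/L}$ and exponent $L(1-\eps)>1$ gives $M(t^{1/L})\leq M(t^{1-\eps})^{1/(L(1-\eps))}=S^{1/(L(1-\eps))}$. Since $f(\gamma)$ is connected and contains both $f(z_1)$ and a point of modulus $S$, it meets $C(S^{1/(L(1-\eps))})$, yielding~(2). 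If $\eps>10/\sqrt{\log t}$, split further: if $\min_\gamma|f|\leq S^{1/(L+2)}$, the same connectivity argument gives~(1); otherwise $|f|>S^{1/(L+2)}\geq 1/S$ on $\gamma$, and we apply Theorem~\ref{main1} with $s=r_S$ and $a=\eps/(1-\eps)$ (so $s^{1+a}=t$ and $\log s^a=\eps\log t$). A short computation shows the size condition reduces to $\log t\geq 64(1-\eps)/\eps^2+80\pi/\eps$, easily satisfied for $\eps\in[10/\sqrt{\log t},1-1/L]$ given $\log t\geq 81920$. The winding bound $(\log S)(\eps\log t)/(10\pi)\geq(\log M(t^{1/L}))\sqrt{\log t}/\pi$ then exceeds $\log M(t^{1/L})\geq 2\pi$, giving~(3).

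\emph{Main obstacle.} The subtle step is calibrating the threshold $10/\sqrt{\log t}$ in Case~II: it must be small enough that the Hadamard estimate in the low-$\eps$ regime delivers the sharper exponent $1/(L(1-\eps))$ of~(2), yet large enough that in the high-$\eps$ regime the parameter $a=\eps/(1-\eps)$ satisfies the growth hypothesis $\log s\geq 64/a^2+80\pi/a$ of Theorem~\ref{main1} on the thinner annulus $[r_S,t]$. A secondary complication in Case~I is that Theorem~\ref{main2} requires the very small value of $|f|$ to occur in the outer subannulus $\{|z|\geq t^{1/L^{1/8}}\}$; when such a value is only available further in, one retreats to this outer subannulus, losing a little in radius but regaining the hypotheses of Theorem~\ref{main1}.
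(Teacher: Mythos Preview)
Your proof is correct and follows essentially the same approach as the paper's: the same dichotomy on whether $S\ge M(t^{1/L})$, the same invocations of Theorem~\ref{main1} (with $s=t^{1/L^{1/8}}$ in Case~I and $s=t^{1-\eps}$ in Case~II) and Theorem~\ref{main2}, and the same use of Hadamard convexity for conclusion~(2). The only differences are cosmetic: your first subcase of Case~I (applying Theorem~\ref{main1} with $s=t^{1/L}$) is redundant since it is subsumed by your third subcase, and in Case~II you branch on $\eps$ before $\min_\gamma|f|$ whereas the paper first checks whether $|f|\le 1$ somewhere (to get conclusion~(1)) and then splits on $\eps$.
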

\begin{proof}
Suppose that $f$,~$L$ and~$t$ satisfy the hypotheses of the theorem.

We consider two separate cases.

(a) First, suppose that
\begin{equation}\label{S}
S = M(t^{1-\eps}), \mbox{ for some } \eps \in [0,1-1/L].
\end{equation}
Then case~(1) must hold if there exists $z \in \gamma$ with $|f(z)| \leq 1$, since
\[S=M(t^{1-\eps})\geq M(t^{1/L}) > 1,\]
by \eqref{condR2} and the fact that $t^{1/L}\ge R_2$.

If $|f(z)| > 1$ for all $z \in \gamma$, then there are two possibilities.

(i) If $\eps \leq 10/ \sqrt{\log t}$, then case~(2) holds since, by~\eqref{Had},
\[
M(t^{1/L}) \leq M(t^{1-\eps})^{1/(L(1-\eps))} = S^{1/(L(1-\eps))}.
\]

(ii) If $\eps > 10/ \sqrt{\log t}$, then we claim that we can apply Theorem~\ref{main1} to $\gamma$ with $s=t^{1-\eps}$ and $s^{1+a} = t$. Clearly~\eqref{|f|<M} holds. To show that~\eqref{s>} holds, we note that $a=\eps/(1-\eps)$ and so, by~\eqref{LR1}, \eqref{condK2} and \eqref{condR2},
\begin{eqnarray*}
\frac{64}{a^2} + \frac{80 \pi}{a}  & = & \frac{64 (1-\eps)^2}{\eps^2} + \frac{80 \pi (1-\eps)}{\eps}\\
& < & \left( \frac{64}{100}\log t + \frac{80 \pi}{10} \sqrt{\log t} \right) (1-\eps)\\
& = & \left( \frac{64}{100} + \frac{8 \pi}{\sqrt{\log t}} \right) (1-\eps) \log t\\
& < & (1-\eps)\log t = \log s,
\end{eqnarray*}
as required.

Thus it follows from Theorem~\ref{main1},~\eqref{LR1},~\eqref{condK2},~\eqref{condR2} and~\eqref{S}, together with the facts that $s^a=t^{\eps}$ and
\[\eps\log t > 10 \sqrt{\log t} > 10\pi,\]
that there exist a continuum $\Gamma \subset \gamma$ and $z_0, z_0' \in \Gamma$ such that
\[
\Delta\!\arg(f(\Gamma);z_0,z'_0) \geq \frac{\log M(t^{1-\eps}) \log (t^{\eps})}{10 \pi} >  \log M(t^{1/L}) \ge 2\pi.
\]
Thus case~(3) holds.

(b) Now suppose that~\eqref{S} does not hold and so
\begin{equation}\label{fsmall2}
|f(z)| < M(t^{1/L}), \mbox{ for } z \in \gamma.
\end{equation}
There are two possibilities.

(i) If
\[
|f(z)| \geq \frac{1}{M(t^{1/L^{1/8}})}, \mbox{ for } z \in \gamma \cap \{z: t^{1/L^{1/8}} \leq |z| \leq t\},
\]
then, by \eqref{fsmall2}, we can apply Theorem~\ref{main1} with $s = t^{1/L^{1/8}}$ and $s^{1+a} = t$, provided~\eqref{s>} holds. This does hold since $a = L^{1/8} - 1 \ge 1$, by \eqref{condK2}, and
\[
\log s = \log (t^{1/L^{1/8}}) > \log (t^{1/L}) > 320 > 64 + 80 \pi > \frac{64}{a^2} + \frac{80 \pi}{a},
\]
by \eqref{LR1} and \eqref{condR2}.

Thus, by Theorem~\ref{main1} and~\eqref{condR2}, there exist a continuum $\Gamma \subset \gamma$ and $z_0, z_0' \in \Gamma$ such that
\begin{eqnarray*}
\Delta\!\arg(f(\Gamma);z_0,z'_0) &\geq& \frac{\log M(s) \log s^a}{10 \pi}\\
&\geq& \frac{\log M(t^{1/L^{1/8}}) \log s}{10 \pi}\\
&\geq& \log M(t^{1/L}) \ge 2\pi.
\end{eqnarray*}
Thus case~(3) holds.

(ii) If
\[
|f(z')| < \frac{1}{M(t^{1/L^{1/8}})}, \mbox{ for some } z' \in \gamma \cap \{z: t^{1/L^{1/8}} \leq |z| \leq t\},
\]
then it follows from Theorem~\ref{main2}, with $s=t$, and~\eqref{condR2} that there exist a continuum $\Gamma \subset \gamma$ and $z_0, z_0' \in \Gamma$ such that
\[
\Delta\!\arg(f(\Gamma);z_0,z'_0) \geq  \log M(t^{1/L}) \ge 2\pi.
\]
Thus case~(3) holds.
\end{proof}

\section{Proof of Theorem~\ref{main}}
\setcounter{equation}{0}
In the proof of Theorem~\ref{main}, we use the following standard distortion theorem for iterates in escaping Fatou components; see \cite[Lemma~7]{wB93}.
\begin{lemma}\label{dist}
Let $f$ be a {\tef}, let $U \subset I(f)$ be a {\sconn} Fatou component of $f$, and let $K$ be a compact subset of $U$.
There exist $C>1$ and $N \in \N$ such that
\[
|f^n(z_0)| \leq C|f^n(z_1)|, \; \mbox{ for } z_0, z_1 \in K, \; n \geq N.
\]
\end{lemma}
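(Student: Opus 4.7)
This is a standard distortion estimate for iterates in simply connected escaping Fatou components, due to Bergweiler~\cite[Lemma~7]{wB93}; I would prove it by combining the Schwarz--Pick inequality for holomorphic maps between hyperbolic Riemann surfaces with the Koebe distortion theorem.

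My first step would be localisation. Since $K$ is compact in the open set $U$, I choose a relatively compact simply connected open neighbourhood $V$ of $K$ with $\overline V \subset U$. As $U \subset I(f)$ and $\overline V$ is compact, the iterates $f^n$ converge uniformly to $\infty$ on $\overline V$, so there is $N$ with $|f^n|>1$ on $\overline V$ for all $n\ge N$; in particular $f^n$ is zero-free on $\overline V$ for such $n$. Moreover, the hyperbolic diameter $D:=\operatorname{diam}_{\rho_U}(K)$ is finite.

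Next let $U_n$ denote the Fatou component of $f$ containing $f^n(U)$. Since $J(f)$ is an infinite perfect subset of $\C$, each $U_n$ omits more than two points and is therefore a hyperbolic Riemann surface. By the Schwarz--Pick inequality applied to $f^n\colon U\to U_n$,
\[ \rho_{U_n}(f^n(z_0),f^n(z_1)) \le \rho_U(z_0,z_1) \le D, \qquad z_0,z_1\in K. \]
Consequently $f^n(K)$ is contained in a hyperbolic ball in $U_n$ of bounded radius. Pulling back through the Riemann map $\phi_n\colon\mathbb D\to U_n$ (normalised by a M\"obius automorphism to place the centre of this ball at $0$) and applying the Koebe distortion theorem yields a constant $C_1$ depending only on $D$ such that
\[ |f^n(z_0)-f^n(z_1)| \le C_1 \operatorname{dist}(f^n(z_1),\partial U_n), \qquad z_0,z_1\in K. \]

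The final step is to bound $\operatorname{dist}(f^n(z_1),\partial U_n)$ by a multiple of $|f^n(z_1)|$. Using that $\partial U_n\subset J(f)$, one exploits the unboundedness and density properties of the Julia set of a transcendental entire function near infinity to conclude that $\operatorname{dist}(f^n(z_1),\partial U_n) \le C_2 |f^n(z_1)|$ for $n\ge N$, and combining this with the previous display then gives the lemma with $C=1+C_1C_2$. I expect this final estimate to be the main obstacle, since it is exactly what upgrades the bound provided by Schwarz--Pick (which a priori gives only a power bound in $|f^n(z_1)|$) to the multiplicative form stated in the lemma; it is also the place where the hypothesis that $f$ is transcendental entire (rather than, say, a polynomial, for which the analogous statement is false in $\{|z|>1\}$) enters decisively.
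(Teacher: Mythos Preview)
The paper does not give its own proof of this lemma; it simply cites \cite[Lemma~7]{wB93}, as you yourself note in your first sentence. Your outline is the standard argument and is essentially correct, but two points deserve comment.

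First, in step~3 you invoke a Riemann map $\phi_n\colon\mathbb D\to U_n$, which presupposes that $U_n$ is simply connected. The lemma as stated only assumes that~$U$ is simply connected, and in general $U_n$ need not be: a simply connected escaping wandering domain can map into a multiply connected (Baker) wandering domain. For multiply connected $U_n$ the lower bound $\rho_{U_n}(w)\ge c/\operatorname{dist}(w,\partial U_n)$ fails, so the Koebe step breaks down and a separate argument (via the annular structure of Baker wandering domains) is required. That said, in the paper's sole application of this lemma---the proof of Theorem~\ref{main}---every $U_n$ is assumed unbounded and hence simply connected by~\cite{iB75}, so your argument covers precisely the case that is needed.

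Second, your final step is simpler than you indicate. Fix any single point $\zeta\in J(f)$; since $\zeta\notin U_n$, the segment from $f^n(z_1)$ to $\zeta$ meets $\partial U_n$, whence
\[
\operatorname{dist}\bigl(f^n(z_1),\partial U_n\bigr)\le |f^n(z_1)-\zeta|\le |f^n(z_1)|+|\zeta|\le 2\,|f^n(z_1)|
\]
as soon as $|f^n(z_1)|\ge|\zeta|$, which holds for all $n\ge N$. No density or unboundedness of $J(f)$ near infinity is needed---only that $J(f)\ne\emptyset$. With this observation your argument closes cleanly (in the simply connected case) with $C=1+2C_1$.
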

We now give the proof of Theorem~\ref{main}.

\begin{proof}[Proof of Theorem~\ref{main}]
Let $f$ be a real {\tef} of order $\rho < 1$ with only real zeros and suppose that~$U$ is a wandering domain of~$f$ such that $U_n$ is unbounded for all $n \in \N$, where $U_n$ denotes the Fatou component containing $f^n(U)$. Since $U$ is wandering we may assume without loss of generality that
\begin{equation}\label{0U}
0 \notin U_n,\; \mbox{ for } n \in \N.
\end{equation}
(In fact this is the only point in the proof where we use the fact that $U$ is a wandering domain, so our proof also rules out the existence of unbounded periodic Fatou components for which all Fatou components in the cycle are unbounded and omit~0.) Note that the Fatou components $U_n$ are all simply connected since they are unbounded; see~\cite{iB75}.

We will show that the existence of such a wandering domain $U$ leads to a contradiction, thus proving Theorem~\ref{main}. The idea of the proof is as follows: we start with a curve $\gamma_0$ that lies in $U$ and we assume that the curve meets two circles of the form $C(r_0^{1/L_0})$ and $C(r_0)$, for some sufficiently large values of~$r_0$ and~$L_0$. We then obtain a contradiction by repeatedly applying Theorem~\ref{main3} to show that either the forward images of this curve must experience repeated radial stretching, and so contradict Lemma~\ref{dist} eventually, or $f^n(\gamma_0)$ must wind round~$0$, for some $n \in \N$, and hence $0 \in U_n$ contradicting the assumption \eqref{0U} about the forward images of~$U$.

A key fact needed to obtain these contradictions is that the real function $f$ has the following symmetry property:
\[
f(\overline z)=\overline{f(z)},\;\;\text{for } z\in\C.
\]
This property implies that $F(f)$ is symmetric with respect to the real axis.

Let $K_2=K_2(\rho)$ and $R_2=R_2(f)$ be as defined in Theorem~\ref{main3}. We begin by taking
\begin{equation}\label{mu}
L_0 \geq 2K_2 \geq 2^9, \quad \mu(r)=M(r^{1/L_0})
\end{equation}
and then
\begin{equation}\label{conds}
r_0 \geq R_2^{L_0} > \exp(320 L_0)
\end{equation}
such that, for $r \geq r_0$,
\begin{equation}\label{conds1}
\mu(r) > r^{16} \mbox{ and hence } \mu^n(r) \to \infty \mbox{ as } n \to \infty.
\end{equation}

Now suppose that $(L_n)$ and $(r_n)$ are sequences with the following properties: for each $n \ge 0$,
 either
\begin{equation}\label{Ln1}
L_{n+1} = L_0 \mbox{ and } r_{n+1} \geq M(r_n^{1/L_n})
\end{equation}
or
\begin{eqnarray}\label{Ln2}
&L_{n+1} = L_n(1 - \eps_n) \mbox{ and } r_{n+1} = M(r_n^{1-\eps_n}),\\[5pt]
&\mbox{where } 0 < \eps_n  \le \dfrac{10}{\sqrt{\log r_n}} \mbox{ and } m(r) \leq M(r_n^{1-\eps_n}), \mbox{ for } r_n^{1 - \eps_n} \leq r \leq r_n.\notag
\end{eqnarray}

Then we claim that, for each $n \geq 0$,

\begin{equation}\label{epsr}
\eps_n < \frac{1}{4^{n+1}} \mbox{ and } r_{n+1} \geq \mu(r_n).
\end{equation}
This is true for $n=0$ since, by~\eqref{mu} and~\eqref{conds},
\begin{equation}\label{eps0}
\eps_0 \le \dfrac{10}{\sqrt{\log r_0}} \leq \frac{10}{\sqrt{320L_0}} \le \frac{10}{\sqrt{320\times 2^9}} < \frac{1}{4}
\end{equation}
and hence
\[
1 - \eps_0 \geq \frac{3}{4} > \frac{1}{L_0}
\]
so that, by~\eqref{Ln1} and~\eqref{Ln2},
\[
r_1 \geq M\left(r_0^{1/L_0}\right) = \mu(r_0).
\]
Now suppose that
\[
\eps_k < \frac{1}{4^{k+1}}\quad \mbox{and}\quad r_{k+1} \geq \mu(r_k), \mbox{ for } 0 \leq k \leq n.
\]
Then, by~\eqref{conds1}, $r_{n+1} \geq \mu^n(r_0) > r_0^{16^n}$ and so, by~\eqref{Ln2},~\eqref{mu} and \eqref{conds},
\[
\eps_{n+1} \le \frac{10}{\sqrt{\log r_{n+1}}} < \frac{10}{4^n \sqrt{\log r_0}} \le \frac{10}{4^n\sqrt{320\times 2^9}} < \frac{1}{4^{n+2}}.
\]
So, by~\eqref{Ln1} and~\eqref{Ln2}, $r_{n+2} \geq \mu(r_{n+1})$. Thus~\eqref{epsr} follows by induction.

We note that it follows from~\eqref{epsr},~\eqref{Ln1} and~\eqref{Ln2} that, for $n \geq 0$,
\[
L_0 \geq L_n \geq L_0\prod_{m=0}^{n-1}(1 - \eps_m) \geq L_0\prod_{m=0}^{n-1}\left(1- \frac{1}{4^{m+1}}\right) \geq \frac{1}{2}L_0,
\]
and so
\begin{equation}\label{Ln4}
L_0/2 \leq L_n \leq L_0, \mbox{ for } n \geq 0.
\end{equation}

Now let $\gamma_0$ be the curve described earlier and suppose that there exist curves $\gamma_n\subset f^n(\gamma_0)$ such that, for $n \geq 0$, we have
\begin{equation}\label{contains}
f(\gamma_{n})\supset \gamma_{n+1},
\end{equation}
\begin{equation}\label{eq3.2}
\gamma_n\subset \overline A(r_n^{1/L_n},r_n),\;\text{ and }\gamma_n\text{ meets both }C(r_n^{1/L_n}) \text{ and } C(r_n).
\end{equation}
(These conditions formalise what we mean by saying that the images of the curve~$\gamma_0$ experience `repeated radial stretching'.)

We deduce, by \eqref{contains}, that there is a point $z\in \gamma_0$ such that, for $n\ge 0$,
\begin{equation}\label{gamman}
f^n(z)\in \gamma_n,\quad \text{so}\quad |f^n(z)|\ge r_n^{1/L_n} \geq (\mu^n(r_0))^{1/L_0},
\end{equation}
by~\eqref{mu},~\eqref{Ln4} and~\eqref{epsr}. Together with~\eqref{conds1}, this implies that $U\subset I(f)$. Since it also follows from~\eqref{Ln4},~\eqref{epsr},~\eqref{contains} and~\eqref{eq3.2} that
\begin{align*}
\frac{\sup \{|f^n(z)|:z\in\gamma_0\}}{\inf \{|f^n(z)|:z\in\gamma_0\}}&\ge \frac{r_n}{r_n^{1/L_n}}\ge \frac{r_n}{r_n^{2/L_0}}=r_n^{1- 2/L_0}\\
&\geq \mu^n(r_0)^{1 - 2/L_0} > \mu^n(r_0)^{1/2} \to\infty\;\;\text{as }n\to\infty,
\end{align*}
we have a contradiction to Lemma~\ref{dist}.

To construct the sequences~$(L_n)$,~$(r_n)$ and~$(\gamma_n)$ above, we proceed as follows. Take $L_0$ and $r_0$ to satisfy~\eqref{conds} and~\eqref{conds1}, and $\gamma_0$ to satisfy~\eqref{eq3.2}. Suppose that, for $k=1,\ldots, n$, we have chosen curves $\gamma_k$ and positive numbers $r_k$ and $L_k$ such that~\eqref{eq3.2} is satisfied with $n$ replaced by $k$ and, in addition, either~\eqref{Ln1} or~\eqref{Ln2} and also~\eqref{contains} hold, with $n$ replaced by $k$ for $k=0,1,\ldots,n-1$. To complete the proof we show that we can choose a curve $\gamma_{n+1}$ and positive numbers $r_{n+1}$ and $L_{n+1}$ so that~\eqref{contains} and also either~\eqref{Ln1} or~\eqref{Ln2} hold, and~\eqref{eq3.2} holds with $n$ replaced by $n+1$.

To do this we apply Theorem~\ref{main3}, with
\[
t=r_n \mbox{ and } L=L_n,
\]
to a curve $\gamma'_n$ meeting $C(r_n^{1/L_n})$ and $C(r_n)$, chosen such that $\gamma'_n \subset \{z: \Im z \geq 0\}$ and
\begin{equation}\label{gamma}
\gamma'_n \subset \gamma_n \cup \gamma_n^*,
 \end{equation}
where $*$ denotes reflection in the real axis.

Note that the hypotheses of Theorem~\ref{main3} are satisfied since $L_0 \geq L_n \geq L_0/2 \geq K_2(\rho)$, by~\eqref{Ln4} and~\eqref{mu}, and
\[
r_n^{1/L_n} \geq \mu^n(r_0)^{1/L_n} \geq r_0^{1/L_0} \geq R_2,
\]
by~\eqref{epsr},~\eqref{conds1},~\eqref{Ln4} and~\eqref{conds}.

If case~(1) or case~(2) of Theorem~\ref{main3} holds for $\gamma'_n$, then the same case holds for~$\gamma_n$, by the symmetry of $f$ in the real axis, and so we can choose $r_{n+1}$, $L_{n+1}$ and $\gamma_{n+1} \subset f(\gamma_n)$ so that they satisfy either~\eqref{Ln1} or~\eqref{Ln2}, \eqref{contains} and also~\eqref{eq3.2} with~$n$ replaced by $n+1$, as required.

Thus, to complete the proof it is sufficient to show that if case~(3) of Theorem~\ref{main3} holds for~$\gamma'_n$, then we obtain a contradiction.

If case~(3) holds for~$\gamma'_n$, then the image under~$f$ of some subcurve of $\gamma'_n$  winds round~$0$ through an angle of at least $2\pi$. Hence, by the symmetry of $f$ in the real axis, the Fatou component $U_{n+1}$ that contains $f^{n+1}(U)$ also contains a Jordan curve that surrounds~0. Since~$U_{n+1}$ is simply connected, it must contain~$0$. This, however, contradicts our assumption~\eqref{0U}, so the proof is complete.
\end{proof}

\section{Applications of a result of Beurling}
\setcounter{equation}{0}

Many authors have studied the relationship between the maximum modulus and the minimum modulus of a {\tef}, in particular the fact that, in some sense, if the minimum modulus is small, then the maximum modulus is forced to have large growth; see \cite[Chapter~8]{wH89} for many such results.

In our proofs of Theorems~\ref{main1} and~\ref{main2} we need new results of this type, which are consequences of the following result from Beurling's thesis \cite[page~96]{Beu}. For any subset $E$ of $(0,\infty)$ we denote by $m_{\ell}(E)$ the logarithmic measure of $E$:
\[
m_\ell(E)=\int_E \frac{dt}{t}.
\]
\begin{lemma}\label{Beur}
Let $f$ be analytic in $\{z:|z| < r_0\}$, let $0\le r_1<r_2< r_0$, and put
\[
E=\{t\in (r_1,r_2):m(t)\le \mu\}, \;\;\text{where } 0<\mu<M(r_1).
\]
Then
\begin{equation}\label{estimate}
\log \frac{M(r_2)}{\mu}>\frac{1}{2}\exp\left(\frac12 m_{\ell}(E)\right)\log \frac{M(r_1)}{\mu}.
\end{equation}
\end{lemma}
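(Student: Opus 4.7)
The plan is to apply a harmonic-measure argument to the non-negative subharmonic function $v(z) = \log^+|f(z)/\mu|$ on $\{|z|<r_0\}$, for which $M^*(r):=\max_{|z|=r} v(z)$ equals $\log(M(r)/\mu)$ when $M(r)\ge\mu$. The hypothesis that $t\in E$ provides a point $\zeta_t\in C(t)$ with $v(\zeta_t)=0$, so the task is to convert the vanishing of $v$ at one point on each of many circles into forced exponential growth of~$M^*$.

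My first step would be to pass to logarithmic coordinates $w=\log z$, which maps $A(r_1,r_2)$ conformally to a vertical strip $S$ of horizontal width $\ell=\log(r_2/r_1)$ and sends the zero set $\{\zeta_t\}$ to points $w_t$ whose real parts form a set $E'\subset(\log r_1,\log r_2)$ of Lebesgue measure $|E'|=m_\ell(E)$. In $S$ the lifted function is non-negative and subharmonic, bounded by $M^*(r_j)$ on the vertical edge $\Re w=\log r_j$ for $j=1,2$, and vanishing at a point on every vertical line $\Re w\in E'$.

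The second step is the quantitative harmonic-measure estimate. Work in the slit domain $\Omega=S\setminus\bigcup_{t\in E}\overline{D_\eps(w_t)}$ for small $\eps>0$; on $\partial D_\eps(w_t)$ the function $v$ is at most some $\eta(\eps)\to 0$, by upper semicontinuity combined with $v(w_t)=0$. A two-constants inequality applied in $\Omega$ yields
\[
M^*(r_1) \le \eta(\eps)(1-\omega_\eps) + M^*(r_2)\,\omega_\eps,
\]
where $\omega_\eps$ is the harmonic measure in $\Omega$ of the right edge $\Re w=\log r_2$, as seen from a point on the left edge realising the maximum $M^*(r_1)$. Beurling's \emph{projection theorem}, applied to the scattered set $\{w_t\}$ by radially projecting it onto a single horizontal ray, reduces the configuration to one whose harmonic measure admits an explicit bound; this gives $\omega_\eps \le 2\exp(-\tfrac12 m_\ell(E))$ uniformly in $\eps$. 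Letting $\eps\to 0$ and rearranging produces the stated inequality.

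The main obstacle is obtaining the sharp constants $1/2$ appearing in both the prefactor and the exponent. A direct extremal-length computation on the strip (with admissible metrics forced to degenerate at each $w_t$) would yield a bound of the correct exponential shape $\exp(c\, m_\ell(E))$ but with a sub-optimal~$c$; obtaining $c=1/2$ with the explicit prefactor $1/2$ requires the precise symmetrisation supplied by Beurling's projection theorem. A further subtlety is that the zeros of $v$ are only pointwise, so the passage through the excision parameter $\eps$ and upper semicontinuity is essential — and is precisely why the conclusion of the lemma is stated with strict inequality.
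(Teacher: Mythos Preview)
The paper does not prove this lemma: it is quoted from Beurling's thesis \cite[page~96]{Beu}, with the remark that Beurling's original prefactor $\pi/(4\sqrt 2)$ has been relaxed to $1/2$ for convenience. So there is no proof in the paper to compare your argument against, and I can only comment on the sketch itself.

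Your overall strategy---subharmonicity of $v=\log^+|f/\mu|$, a two-constants estimate, and Beurling's projection theorem to control the relevant harmonic measure---is indeed how this inequality is obtained, but your domain is set up incorrectly. You work in the strip $S$ of width $\log(r_2/r_1)$, so the left edge $\{\Re w=\log r_1\}$ is part of $\partial\Omega$; on that edge $v$ is bounded only by $M^*(r_1)$, the very quantity you are trying to estimate. The two-constants theorem in $\Omega$ therefore reads
\[
v(z)\le M^*(r_1)\,\omega_L(z)+M^*(r_2)\,\omega_R(z)+\eta(\eps)\,\omega_C(z),
\]
and as $z$ approaches the left-edge point realising $M^*(r_1)$ one has $\omega_L(z)\to 1$, so the inequality collapses to a triviality rather than the displayed bound $M^*(r_1)\le \eta(\eps)(1-\omega_\eps)+M^*(r_2)\,\omega_\eps$. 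The remedy is to take $\Omega$ inside the whole disk $\{|z|<r_2\}$ (equivalently, after the logarithm---which is a covering map, not a conformal map as you write---inside the half-plane $\{\Re w<\log r_2\}$), with only $C(r_2)$ and the excised disks as boundary. The point on $C(r_1)$ achieving $M^*(r_1)$ is then genuinely interior, your two-constants inequality becomes legitimate, and Beurling's projection theorem in the disk replaces the scattered zero set by a radial slit of logarithmic measure $m_\ell(E)$; the factor $\exp\!\big(-\tfrac12 m_\ell(E)\big)$ then emerges from the explicit square-root behaviour of the conformal map of the slit disk.
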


We remark that Beurling proved this estimate with the first constant on the right-hand side equal to $\pi/(4\sqrt 2)$, but for simplicity we use the value~$1/2$ here. We also remark that further applications of Lemma~\ref{Beur} were given in~\cite{RS13b} and~\cite{RS13a}.

Our first application here of Lemma~\ref{Beur} is an estimate for the growth of the maximum modulus over an interval on which the minimum modulus is less than the maximum modulus to a fixed power less than 1.

\begin{lemma}\label{Be3}
Let $f$ be a {\tef} and suppose that
\[
\log m(t)\le (1-\delta)\log M(t),\;\;\text{for }r\le t\le r^k,
\]
where $r>1$, $k\ge 1+4\log 2/\log r$ and $0 < \delta < 1$. Then
\[
\log M(r^k) > r^{\delta\log k/(16\log 2)}\log M(r).
\]
\end{lemma}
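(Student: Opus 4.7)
The approach is to apply Beurling's estimate (Lemma~\ref{Beur}) iteratively across a chain of sub-intervals of $[r,r^k]$ of multiplicative length~$16$. The step factor~$16$ is chosen to match the hypothesis $(k-1)\log r\ge 4\log 2$: it is the coarsest scale at which at least one sub-interval is guaranteed to fit inside $(r,r^k)$, and it makes the Beurling multiplier $\tfrac12\exp(\tfrac12 m_\ell(E))=\tfrac12 e^{2\log 2}=2$, which rearranges to a clean per-step growth factor.

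First I will set $N=\lfloor (k-1)\log r/(4\log 2)\rfloor$, which is at least~$1$ by hypothesis, and $a_j=16^j r$ for $j=0,1,\ldots,N$, so that $a_N\le r^k$. Then for each $j=0,\ldots,N-1$ I will take $\mu_j=M(a_{j+1})^{1-\delta}$ and split into cases. If $\mu_j\ge M(a_j)$ the definition of $\mu_j$ gives $\log M(a_{j+1})\ge \log M(a_j)/(1-\delta)$ directly. Otherwise, the hypothesis $m(t)\le M(t)^{1-\delta}$ together with monotonicity of~$M$ forces $m(t)\le\mu_j$ throughout $(a_j,a_{j+1})$, so the set $E$ in Lemma~\ref{Beur} is the whole interval and $m_\ell(E)=4\log 2$; Beurling then yields $\delta\log M(a_{j+1})>2(\log M(a_j)-(1-\delta)\log M(a_{j+1}))$, which rearranges to $\log M(a_{j+1})>\log M(a_j)/(1-\delta/2)$. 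Either way the per-step factor strictly exceeds $1/(1-\delta/2)$.

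Iterating and using $-\log(1-x)>x$, one obtains $\log M(r^k)\ge \log M(a_N)>e^{N\delta/2}\log M(r)$. It remains to verify that $e^{N\delta/2}\ge r^{\delta\log k/(16\log 2)}$, i.e., $N\ge \log k\log r/(8\log 2)$. I will split two sub-cases: when $(k-1)\log r\ge 8\log 2$, the estimate $N\ge (k-1)\log r/(4\log 2)-1$ combined with $\log k\le k-1$ gives the bound; when $4\log 2\le (k-1)\log r<8\log 2$, one has $N=1$ and the target inequality reduces to $\log k\log r<8\log 2$, which again follows from $\log k\le k-1$ and the upper bound on $(k-1)\log r$.

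The main obstacle is this last verification in the edge case $N=1$, where the hypothesis is only barely satisfied. A single Beurling application on $(r,16r)$ must beat the target factor on its own; the saving is that the same hypothesis forcing $N=1$ also forces $\log k\log r$ to be small, so the target exponent $\delta\log k/(16\log 2)$ is correspondingly small, and the universal per-step factor $1/(1-\delta/2)>e^{\delta/2}$ is already sufficient.
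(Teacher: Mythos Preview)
Your proof is correct and uses the same core idea as the paper --- iterate Beurling's estimate (Lemma~\ref{Beur}) across a chain of sub-intervals of $[r,r^k]$, obtaining the per-step factor $1/(1-\delta/2)$, and then count the steps --- but with a different decomposition. You take $a_j=16^j r$, so every sub-interval has the same logarithmic length $4\log 2$; this gives $N=\lfloor (k-1)\log r/(4\log 2)\rfloor$ steps, and you then need $\log k\le k-1$ plus a short case split to reach $N\ge \log k\log r/(8\log 2)$. The paper instead sets $\kappa=1+4\log 2/\log r$ and $r_m=r^{\kappa^m}$, so the sub-intervals grow in logarithmic length; the number $n$ of steps then satisfies $\kappa^{n+1}>k$, from which $n>\log k\log r/(8\log 2)$ drops out directly via $n\ge (n+1)/2$ and $\log(1+x)\le x$, with no case analysis. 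Your partition is a little more elementary and in fact yields more steps (on the order of $(k-1)\log r$ rather than $\log k\log r$), though the surplus is not needed for the stated bound; you also make explicit the harmless case $\mu_j\ge M(a_j)$, which the paper passes over silently.
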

\begin{proof}
We begin by putting $\kappa=1+4\log 2/\log r$ and taking $n$ to be the largest integer such that
\[
\kappa^n\le k.
\]
Note that, by hypothesis, $n\ge 1$.

Then, for $0 \leq m \leq n$, we put $r_m=r^{\kappa^m}$. For $0 \leq m <n$, we apply Lemma~\ref{Beur} to the interval $(r_m,r_{m+1})$ with $\mu = M(r_{m+1})^{1-\delta}$ to deduce that
\begin{eqnarray*}
\log M(r_{m+1}) & > & \left(2\delta r_m^{-(\kappa-1)/2}+1-\delta\right)^{-1}\log M(r_m)\\
& = & \left(2\delta r_m^{-\log 4/\log r}+1-\delta\right)^{-1}\log M(r_m)\\
& \geq & \left(2\delta r^{-\log 4/\log r}+1-\delta\right)^{-1}\log M(r_m)\\
& = & \frac{1}{1-\delta/2}\log M(r_m).
\end{eqnarray*}

Hence
\begin{equation}\label{M-ineq1}
\log M(r^k)\ge \log M(r_n)> \left(\frac{1}{1-\delta/2}\right)^n \log M(r).
\end{equation}

Now,
\[
\kappa^{n+1}>k,
\]
so
\begin{equation}\label{M-ineq2}
n\ge \frac{n+1}{2}>\frac{\log k}{2\log \kappa}=\frac{\log k}{2\log (1+4\log 2/\log r)}\ge \frac{\log k\log r}{8\log 2}.
\end{equation}
Thus, by~\eqref{M-ineq1} and~\eqref{M-ineq2},
\begin{eqnarray*}
\log M(r^k)&\ge& \left(\frac{1}{1-\delta/2}\right)^{\log k\log r/(8\log 2)}\log M(r)\\
&=& r^{-\log(1-\delta/2)\log k/(8\log 2)}\log M(r)\\
&>&r^{\delta\log k/(16\log 2)}\log M(r),
\end{eqnarray*}
as required.
\end{proof}

Next we prove a {\it local} version of a $\cos \pi \rho$-type result. (For a description of classical $\cos \pi \rho$-type results, see~\cite{wH89}.) In~\cite{RS13b} we used Beurling's estimate in Lemma~\ref{Beur} to obtain a local result of this type for functions of order less than $1/2$. Here we need a result that can be applied to functions of order less than 1. In fact, Lemma~\ref{Be1} can be applied to functions of {\it any} order, and shows that the minimum modulus cannot be too small everywhere on an interval in relation to the value of the maximum modulus at the upper end of the interval. This result is not sharp but it is sufficient for our purposes.

\begin{lemma}\label{Be1}
Let $f$ be a {\tef}, $0 < \delta \leq 1$ and
\[0 < \lambda \leq \left( \frac{\delta}{2(1 + \delta)}\right)^2.\]
If $M(\lambda r) \geq 1$, then
\[
m(t) > \frac{1}{M(r)^{\delta}}, \mbox{ for some } t \in (\lambda r, r).
\]
\end{lemma}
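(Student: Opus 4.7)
The plan is to argue by contradiction via Beurling's estimate (Lemma~\ref{Beur}). Assume for contradiction that $m(t) \leq \mu := 1/M(r)^{\delta}$ for every $t \in (\lambda r, r)$. Then the exceptional set $E = \{t \in (\lambda r, r) : m(t) \leq \mu\}$ is the full interval $(\lambda r, r)$, so its logarithmic measure is $m_\ell(E) = \log(1/\lambda)$.

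To apply Lemma~\ref{Beur} with $r_1 = \lambda r$ and $r_2 = r$, I first observe that the hypothesis $M(\lambda r) \geq 1$ combined with the strict monotonicity of the maximum modulus of a (non-constant) transcendental entire function gives $M(r) > M(\lambda r) \geq 1$, hence $\log M(r) > 0$ and $\mu < 1 \leq M(r_1)$, so the hypotheses of Lemma~\ref{Beur} are met. The estimate then reads
\[
\log\frac{M(r)}{\mu} > \frac{1}{2}\exp\!\left(\frac{1}{2}\log\frac{1}{\lambda}\right)\log\frac{M(\lambda r)}{\mu} = \frac{1}{2\sqrt{\lambda}}\log\frac{M(\lambda r)}{\mu}.
\]

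Next I would evaluate each side. On the left, $\log(M(r)/\mu) = (1+\delta)\log M(r)$. On the right, using $M(\lambda r) \geq 1$, we have $\log(M(\lambda r)/\mu) \geq \delta \log M(r)$. Substituting and dividing by the positive quantity $\log M(r)$ yields
\[
1 + \delta > \frac{\delta}{2\sqrt{\lambda}}, \qquad\text{i.e.}\qquad \sqrt{\lambda} > \frac{\delta}{2(1+\delta)},
\]
which contradicts the hypothesis $\lambda \leq \bigl(\delta/(2(1+\delta))\bigr)^2$. This contradiction proves the lemma.

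The argument is essentially a one-shot application of Beurling's lemma, and I do not anticipate any serious obstacle. The only point that needs a little care is ensuring $\log M(r) > 0$ so that the final division is legitimate; this is where the harmless observation $M(r) > M(\lambda r) \geq 1$ (by the maximum principle applied to the non-constant entire function $f$) is used. Everything else is routine bookkeeping on the two sides of \eqref{estimate}.
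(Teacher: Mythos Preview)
Your proof is correct and follows essentially the same route as the paper's own argument: assume the conclusion fails, apply Beurling's estimate (Lemma~\ref{Beur}) on the full interval $(\lambda r, r)$ with $\mu = 1/M(r)^{\delta}$, and simplify to obtain $\sqrt{\lambda} > \delta/(2(1+\delta))$, contradicting the hypothesis. If anything, you are slightly more explicit than the paper in checking that $\mu < M(r_1)$ and that $\log M(r) > 0$ before dividing.
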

\begin{proof}
If the conclusion is false, then it follows from Lemma~\ref{Beur} with $\mu = 1/M(r)^{\delta}$ that
\[
\log M(r)^{1+\delta} > \frac{1}{2} \exp\left( \frac{1}{2} \int_{\lambda r}^r \frac{dt}{t}\right) \log M(r)^{\delta}.
\]
This implies that
\[
 (1 + \delta) \log M(r) > \frac{1}{2} \sqrt{\frac{1}{\lambda}}\, \delta \log M(r),
\]
which in turn implies that $\lambda > \left( \frac{\delta}{2(1 + \delta)}\right)^2$. This, however, is a contradiction.
\end{proof}

\section{Proof of Theorem~\ref{main1}}
\setcounter{equation}{0}

The proofs of Theorems~\ref{main1} and~\ref{main2} both use the notion of extremal length and we begin this section by summarising some of the key results about extremal length that we use. For more details see, for example,~\cite{Ahl}.

Let $\Omega \subset \C$ be a domain and let $\Gamma$ be a collection of rectifiable arcs in $\Omega$. For a Riemannian metric $\rho|dz|$ on $\Omega$, each $\gamma \in \Gamma$ has a well-defined length
\[
L(\gamma,\rho) = \int_{\gamma} \rho\, |dz|
\]
and $\Omega$ has a well-defined area
\[
A(\Omega,\rho) = \int\!\! \int_{\Omega} \rho^2 dx \, dy.
\]
We put
\[
L(\Gamma, \rho) = \inf_{\gamma \in \Gamma} L(\gamma, \rho)
\]
and define the {\it extremal length} of $\Gamma$ in $\Omega$ to be
\[
\lambda_{\Omega}(\Gamma) =  \sup_{\rho} \frac{L(\Gamma,\rho)^2}{A(\Omega,\rho)}\,,
\]
where the $\sup$ is taken over all $\rho$ such that $0 < A(\Omega, \rho) < \infty$.

The following key results about extremal length can be found in~\cite[pages 50--53]{Ahl}.

\begin{lemma}\label{ext0}
Let $\Omega \subset \C$ be a domain and let $\Gamma$ be a collection of rectifiable arcs in $\Omega$. If $F$ is a conformal map on $\Omega$, then $\lambda_{\Omega}(\Gamma) = \lambda_{F(\Omega)}(F(\Gamma))$.
\end{lemma}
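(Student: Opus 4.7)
The plan is to exploit the conformal map $F$ to set up a weight-preserving bijection between admissible Riemannian metrics on $\Omega$ and admissible Riemannian metrics on $F(\Omega)$, and then to verify that both length and area are preserved under this bijection. Since the two suprema in the definition of extremal length are taken over such metrics, equality of the suprema will follow immediately.

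First, given a metric $\rho|dz|$ on $\Omega$, I would define a companion metric $\tilde{\rho}|dw|$ on $F(\Omega)$ by the pushforward rule
\[\tilde{\rho}(w)=\frac{\rho(F^{-1}(w))}{|F'(F^{-1}(w))|},\]
so that formally $\tilde{\rho}|dw|=\rho|dz|$ under $w=F(z)$. The map $\rho\mapsto\tilde{\rho}$ is a bijection from metrics on $\Omega$ to metrics on $F(\Omega)$, since the analogous pullback using $F^{-1}$ (which is also conformal) is its inverse.

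Next, for each rectifiable $\gamma\in\Gamma$ the change of variable $w=F(z)$ in the line integral gives
\[L(F(\gamma),\tilde{\rho})=\int_{F(\gamma)}\tilde{\rho}(w)\,|dw|=\int_{\gamma}\frac{\rho(z)}{|F'(z)|}\,|F'(z)|\,|dz|=L(\gamma,\rho),\]
so that taking the infimum over $\gamma$ yields $L(F(\Gamma),\tilde{\rho})=L(\Gamma,\rho)$. For the area, because $F$ is holomorphic with Jacobian $|F'|^{2}$, the usual change-of-variable formula gives
\[A(F(\Omega),\tilde{\rho})=\iint_{F(\Omega)}\tilde{\rho}(w)^{2}\,du\,dv=\iint_{\Omega}\frac{\rho(z)^{2}}{|F'(z)|^{2}}\,|F'(z)|^{2}\,dx\,dy=A(\Omega,\rho).\]
In particular the finiteness and positivity constraint $0<A(\Omega,\rho)<\infty$ is preserved by $\rho\mapsto\tilde{\rho}$, so admissible metrics correspond to admissible metrics.

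Putting these facts together, the ratio $L(\Gamma,\rho)^{2}/A(\Omega,\rho)$ equals $L(F(\Gamma),\tilde{\rho})^{2}/A(F(\Omega),\tilde{\rho})$ for every admissible $\rho$, and the bijective correspondence between admissible metrics then forces
\[\lambda_{\Omega}(\Gamma)=\sup_{\rho}\frac{L(\Gamma,\rho)^{2}}{A(\Omega,\rho)}=\sup_{\tilde{\rho}}\frac{L(F(\Gamma),\tilde{\rho})^{2}}{A(F(\Omega),\tilde{\rho})}=\lambda_{F(\Omega)}(F(\Gamma)).\]
There is no real obstacle here; the only care required is in justifying the substitutions in the line and area integrals (standard once one notes that $F$ maps rectifiable arcs to rectifiable arcs and that $|F'|$ is bounded away from $0$ and $\infty$ on compacta), which is why this lemma is cited directly from Ahlfors rather than reproved in the paper.
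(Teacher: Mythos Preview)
Your argument is correct and is precisely the standard proof of conformal invariance of extremal length; the paper does not supply its own proof of this lemma but simply cites \cite[pages~50--53]{Ahl}, where exactly this pushforward-of-metrics argument is given. So there is nothing to compare: you have reproduced the proof that the paper defers to.
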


\begin{lemma}\label{ext1}
Let $\Omega \subset \C$ be a domain and let $\Gamma$ and $\Gamma'$ be collections of rectifiable arcs in $\Omega$. If every $\gamma\in \Gamma$ has a subarc $\gamma'\in \Gamma'$, then $\lambda_{\Omega}(\Gamma) \ge \lambda_{\Omega}(\Gamma')$.
\end{lemma}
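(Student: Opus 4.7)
The plan is to reduce the extremal length comparison to a pointwise comparison of metric lengths for each admissible $\rho$, and then pass to the supremum. First I would fix an arbitrary Riemannian metric $\rho|dz|$ on $\Omega$ satisfying $0 < A(\Omega,\rho) < \infty$, and exploit the hypothesis that each $\gamma\in\Gamma$ contains some subarc $\gamma'\in\Gamma'$.

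Because $\rho\ge 0$, the additivity of the line integral over subarcs immediately gives
\[
L(\gamma,\rho)=\int_\gamma \rho\,|dz|\;\ge\;\int_{\gamma'}\rho\,|dz|=L(\gamma',\rho)\;\ge\;L(\Gamma',\rho),
\]
where the final inequality uses only the definition $L(\Gamma',\rho)=\inf_{\gamma''\in\Gamma'}L(\gamma'',\rho)$. Taking the infimum over $\gamma\in\Gamma$ on the left-hand side then yields $L(\Gamma,\rho)\ge L(\Gamma',\rho)$. Since the area $A(\Omega,\rho)$ depends only on $\rho$ and $\Omega$ (not on the curve family), squaring and dividing gives
\[
\frac{L(\Gamma,\rho)^2}{A(\Omega,\rho)}\;\ge\;\frac{L(\Gamma',\rho)^2}{A(\Omega,\rho)}
\]
for every admissible $\rho$. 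Taking the supremum over admissible $\rho$ on both sides then delivers the required bound $\lambda_\Omega(\Gamma)\ge\lambda_\Omega(\Gamma')$.

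There is essentially no obstacle here; the only conceptual point worth stressing is that both extremal lengths are being computed on the \emph{same} domain $\Omega$, so the class of admissible $\rho$ is identical on the two sides and the supremum can be taken consistently. The argument uses nothing beyond the definitions of length, area and extremal length, together with the trivial monotonicity $\int_{\gamma'}\rho\,|dz|\le\int_{\gamma}\rho\,|dz|$ of the line integral under inclusion of arcs.
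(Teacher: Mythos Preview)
Your argument is correct and is exactly the standard monotonicity proof for extremal length. The paper does not give its own proof of this lemma; it simply cites \cite[pages~50--53]{Ahl}, where the same argument appears.
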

These two lemmas can be used to give the following distortion theorem for conformal mappings, which is used in the proof of Theorem~\ref{main2}. Roughly speaking, this result states that if a quadrilateral is `long and thin' in a certain sense related to one choice of opposite sides, then its image under a conformal mapping cannot be long and thin with respect to the other choice of opposite sides.  The result is of independent interest and may be known, but we are not aware of a reference.

\begin{lemma}\label{long-thin}
Let $Q$ be a quadrilateral, that is, a Jordan domain together with four boundary points that divide $\partial Q$ into two pairs of opposite sides $\alpha, \alpha'$ and $\beta, \beta'$. Let $\phi$ be conformal on $\overline Q$ and let $a$, $b$, $A$ and $B$ be positive constants such that
\begin{equation}\label{tt1}
|\Re z' -\Re z| \ge a,\quad\text{for } z\in \alpha,\; z'\in \alpha',
\end{equation}
\begin{equation}\label{tt2}
|\Im z' -\Im z| \le b,\quad\text{for } z\in \beta,\; z'\in \beta',\; \Re z=\Re z',
\end{equation}
\begin{equation}\label{tt3}
|\Re w' -\Re w| \le A,\quad\text{for } w\in \phi(\alpha),\; w'\in \phi(\alpha'),\; \Im w=\Im w',
\end{equation}
\begin{equation}\label{tt4}
|\Im w' -\Im w| \ge B,\quad\text{for } w\in \phi(\beta),\; w'\in \phi(\beta').
\end{equation}
Then
\[
\frac ab\le \frac AB.
\]
\end{lemma}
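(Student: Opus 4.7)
The proof will be an extremal length argument exploiting the conformal invariance of the modulus of a quadrilateral. Let $\Gamma$ denote the family of rectifiable arcs in $Q$ joining $\alpha$ to $\alpha'$, and let $\Gamma^*$ denote the corresponding family joining $\beta$ to $\beta'$. My plan is to establish the twin lower bounds $\lambda_Q(\Gamma)\ge a/b$ and $\lambda_{\phi(Q)}(\phi(\Gamma^*))\ge B/A$, the first from \eqref{tt1}--\eqref{tt2} and the second from \eqref{tt3}--\eqref{tt4}, and then to combine them via Lemma~\ref{ext0} and the classical reciprocity identity $\lambda_Q(\Gamma)\cdot\lambda_Q(\Gamma^*)=1$ for the two conjugate moduli of a quadrilateral (a standard fact, see~\cite{Ahl}). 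The chain
\[
\frac{a}{b}\le \lambda_Q(\Gamma)=\lambda_{\phi(Q)}(\phi(\Gamma))=\frac{1}{\lambda_{\phi(Q)}(\phi(\Gamma^*))}\le\frac{A}{B}
\]
then yields the required conclusion.

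For the first bound, I set $x_0=\sup\{\Re z:z\in\alpha\}$ and $x_1=\inf\{\Re z:z\in\alpha'\}$, so that \eqref{tt1} gives $x_1-x_0\ge a$. For $x\in[x_0,x_1]$ I write $h(x)$ for the linear Lebesgue measure of the vertical slice $Q\cap\{\Re z=x\}$. The cyclic order of the four sides around $\partial Q$, together with the Jordan curve theorem, ensures that every component of such a slice joins $\beta$ to $\beta'$, so \eqref{tt2} gives $h(x)\le b$. I then apply the cylindrical test metric $\rho(z)=1/h(\Re z)$ on $Q\cap\{x_0\le\Re z\le x_1\}$ and $\rho\equiv 0$ elsewhere. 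A Fubini computation yields
\[
A(Q,\rho)=\int_{x_0}^{x_1}\frac{dx}{h(x)},
\]
while the horizontal projection of any $\gamma\in\Gamma$ covers $[x_0,x_1]$ by the intermediate value theorem, so the pointwise estimate $|dz|\ge|d\Re z|$ gives $L(\gamma,\rho)\ge\int_{x_0}^{x_1}dx/h(x)$ too. Therefore
\[
\lambda_Q(\Gamma)\ge\frac{L(\Gamma,\rho)^2}{A(Q,\rho)}\ge\int_{x_0}^{x_1}\frac{dx}{h(x)}\ge\frac{x_1-x_0}{b}\ge\frac{a}{b}.
\]

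The companion lower bound $\lambda_{\phi(Q)}(\phi(\Gamma^*))\ge B/A$ is then obtained by the verbatim symmetric argument, carried out in $\phi(Q)$ with the horizontal and vertical directions interchanged and with \eqref{tt4} playing the role of \eqref{tt1} and \eqref{tt3} that of \eqref{tt2}. I anticipate that the main technical obstacle will be the topological bookkeeping required to justify, for $x$ in the interior of $[x_0,x_1]$, that every component of the vertical slice of $Q$ has one endpoint on $\beta$ and the other on $\beta'$, so that \eqref{tt2} can be applied to bound its length by $b$; this should reduce to a parity argument based on the cyclic order $\alpha,\beta',\alpha',\beta$ of the sides on $\partial Q$, with the analogous argument needed for horizontal slices of $\phi(Q)$ in the dual step.
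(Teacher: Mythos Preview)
Your overall strategy---bound $\lambda_Q(\Gamma)$ below using vertical slices, bound $\lambda_{\phi(Q)}(\phi(\Gamma^*))$ below using horizontal slices, then combine via conformal invariance and the reciprocity $\lambda_Q(\Gamma)\,\lambda_Q(\Gamma^*)=1$---is precisely the paper's. The gap lies in the step you yourself flagged as the main obstacle: the claim that every component of $Q\cap\{\Re z=x\}$ has one endpoint on $\beta$ and one on $\beta'$ is \emph{false}, and no parity argument will rescue it. For a concrete counterexample take the Jordan domain bounded by the polygon
\[
(0,0)\to(0,1)\to(5,1)\to(5,3)\to(1,3)\to(1,4)\to(6,4)\to(6,0)\to(0,0),
\]
with $\alpha$ the edge from $(0,0)$ to $(0,1)$, $\beta$ the polygonal arc from $(0,1)$ to $(6,4)$, $\alpha'$ from $(6,4)$ to $(6,0)$, and $\beta'$ from $(6,0)$ to $(0,0)$. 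Then $x_0=0$, $x_1=6$, and at $x=3$ the slice has components $\{3\}\times(0,1)$ and $\{3\}\times(3,4)$; the second has \emph{both} endpoints on $\beta$. Thus your justification of $h(x)\le b$ collapses, and with more elaborate shapes of this kind the total slice measure can genuinely exceed $b$.

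The paper circumvents this by working not with the full slice but with a \emph{single} vertical crosscut $I_x\subset Q\cap\{\Re z=x\}$ that separates $\alpha$ from $\alpha'$ in $Q$. Such a separating crosscut necessarily has one endpoint on $\beta$ and one on $\beta'$ (each of $\beta,\beta'$ is a connected boundary arc meeting both complementary pieces of $Q\setminus I_x$), so \eqref{tt2} gives its height $h(x)\le b$ directly. Taking $\rho=\chi_I$ with $I=\bigcup_x I_x$, one gets $L(\Gamma,\rho)\ge\tilde a$ (every arc from $\alpha$ to $\alpha'$ must meet each $I_x$) and $A(Q,\rho)=\int h(x)\,dx\le\tilde a\,b$, whence $\lambda_Q(\Gamma)\ge a/b$. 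Your metric $\rho=1/h$ would also work once $h(x)$ is redefined to be the height of this separating crosscut rather than the measure of the whole slice.
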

\begin{proof} By an approximation argument, we can assume that $\partial Q$ is a piecewise analytic curve. We let $I$ denote a union of vertical crosscuts $I_x$ of $Q$, with real part $x$ and height $h(x)$ that separate $\alpha$ from $\alpha'$, which can be chosen, by \eqref{tt1}, so that~$h$ is defined and measurable on an interval~$J$ of length $\tilde a\ge a$ (see \cite[p. 93]{Nev}). Then let $\rho = \chi_I$ be the characteristic function of~$I$.

If $\Gamma$ is the collection of arcs in $Q$ that join $\alpha$ to $\alpha'$, then
\[
L(\Gamma,\rho) = \inf_{\gamma \in \Gamma} L(\gamma,\rho) \geq \tilde a
\]
and
\[
A(Q,\rho) = \int\!\!\int_{I} \rho^2\, dx \,dy = \int_{J} h(x)\, dx.
\]
Since $h(x)\le b$ for $x\in J$, by \eqref{tt2}, we deduce that
\begin{equation}\label{extr1}
\lambda_Q(\Gamma) \geq \frac{L(\Gamma,\rho)^2}{A(Q,\rho)} \ge \frac{\tilde a^2}{\tilde ab}=\frac{\tilde a}{b}\ge \frac{a}{b}.
\end{equation}
Now let $\Gamma'$ denote the collection of arcs in $Q$ that join $\beta$ to $\beta'$. Then (see \cite[page~53]{Ahl}),
\begin{equation}\label{extr2}
\lambda_Q(\Gamma')=1/\lambda_Q(\Gamma).
\end{equation}

Now $\phi(\Gamma')$ is the collection of arcs in $\phi(Q)$ that join $\phi(\beta)$ to $\phi(\beta')$, and
\begin{equation}\label{extr3}
\lambda_Q(\Gamma')=\lambda_{\phi(Q)}(\phi(\Gamma')),
\end{equation}
since $\phi$ is a conformal map. By using a similar argument to the above, involving horizontal crosscuts of $\phi(Q)$, we deduce from \eqref{tt3} and \eqref{tt4} that
\begin{equation}\label{extr4}
\lambda_{\phi(Q)}(\phi(\Gamma'))\ge \frac BA.
\end{equation}
Combining \eqref{extr1}, \eqref{extr2}, \eqref{extr3} and \eqref{extr4} gives $a/b\le A/B$ as required.
\end{proof}
%

The proofs of Theorems~\ref{main1} and~\ref{main2} also use the result that, for the functions~$f$ being considered, $\log f$ is conformal in the upper half-plane. Actually this result holds more generally for all functions in the so-called Laguerre--P\'olya class, which consists of all entire functions that can be approximated uniformly on compact subsets of the plane by real polynomials with all zeros
real. This class includes all real functions of order less than~2 with only real zeros; see \cite[page~266]{T}. The conformality of $\log f$ follows from a well-known monotonicity property of such functions (see, for example, \cite[Lemma~2.2 and Proposition~4.1]{CS00}) together with the Noshiro--Warschawski theorem (see \cite[page~46]{pD83}) but we include the proof for completeness.

\begin{lemma}\label{incr}
Let $f$ be a {\tef} in the Laguerre--P\'olya class. Then
\begin{itemize}
\item[(a)]
for $x \in \R$, $|f(x+iy)|$ is strictly increasing with respect to $y$, for $y>0$,
\item[(b)]
any analytic branch of $\log f$ is conformal in the upper half-plane.
\end{itemize}
\end{lemma}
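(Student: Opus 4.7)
The plan is to use the Hadamard factorisation enjoyed by functions in the Laguerre--P\'olya class: any such transcendental entire $f$ admits a representation
$$f(z)=cz^m e^{-\alpha z^2+\beta z}\prod_k(1-z/a_k)e^{z/a_k},$$
with $c\in\R\setminus\{0\}$, $m\in\{0,1,2,\ldots\}$, $\alpha\ge 0$, $\beta\in\R$, and real nonzero $a_k$ satisfying $\sum 1/a_k^2<\infty$. Part (a) will then follow from a direct term-by-term differentiation of $\log|f(x+iy)|$, and part (b) from a standard univalence criterion applied to the branch of $\log f$ in the upper half-plane.

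For part (a), I would take the real part of the logarithm of the Hadamard product and differentiate with respect to $y$; a routine calculation gives
$$\frac{\partial}{\partial y}\log|f(x+iy)|=\frac{my}{x^2+y^2}+2\alpha y+\sum_k\frac{y/a_k^2}{(1-x/a_k)^2+(y/a_k)^2}.$$
Every summand on the right is non-negative for $y>0$, and at least one is strictly positive unless $f(z)=ce^{\beta z}$. Dismissing this trivial case (which is excluded by the hypotheses in all intended applications, where $f$ has at least one real zero), the strict monotonicity asserted in (a) follows immediately.

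For part (b), since every zero of $f$ is real, $f$ is non-vanishing on the simply connected upper half-plane $\mathbb{H}=\{z:\Im z>0\}$, so an analytic branch $F$ of $\log f$ exists there with derivative $F'=f'/f$. The inequality from (a) rewrites as $-\Im F'(z)=\partial_y\log|f(x+iy)|>0$ on $\mathbb{H}$, equivalently $\Re(iF'(z))>0$ throughout $\mathbb{H}$. I would then apply the Noshiro--Warschawski theorem to the holomorphic function $iF$ on the convex domain $\mathbb{H}$, obtaining that $iF$, and hence $F=\log f$, is univalent; this is the conformality asserted in the lemma.

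I do not anticipate a serious obstacle: the whole argument is a direct computation with the Hadamard product combined with the classical Noshiro--Warschawski univalence criterion, both of which are available off the shelf. The only delicate point is the edge case $f=ce^{\beta z}$, where $|f(x+iy)|$ is independent of $y$ and the strict monotonicity in (a) fails; this case is trivial for part (b) and falls outside the scope of the lemma's intended applications.
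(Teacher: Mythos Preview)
Your proposal is correct and follows essentially the same route as the paper: both start from the P\'olya/Hadamard factorisation of Laguerre--P\'olya functions and establish the single key inequality $\Im(f'/f)<0$ on $\mathbb H$ (you express this as $\partial_y\log|f|>0$, the paper as $\Im g'<0$; these are the same computation up to Cauchy--Riemann), and both then appeal to Noshiro--Warschawski for part~(b), the paper writing out the line-integral proof of that criterion explicitly rather than citing it. Your explicit flagging of the degenerate case $f(z)=ce^{\beta z}$, where part~(a) fails as stated, is in fact more careful than the paper, which asserts the strict inequality $\Im g'<0$ without excluding this case.
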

\begin{proof}
It was proved by P\'olya \cite{gP} that every function in the Laguerre--P\'olya class is of the form
\[
f(z)=cz^{p_0}e^{az+bz^2}\prod_{j\in \N}\left(1+\frac{z}{a_j}\right)^{p_j}e^{-p_j z/a_j},
\]
where $c\in\R\setminus\{0\}$, $a\in \R$, $b\le 0$, $a_j\in \R\setminus\{0\}$ for $j \in \N$, and $p_j\in \{0,1,\ldots\}$ for $j\geq 0$.

Let $g$ be an analytic branch of $\log f$ in the upper half-plane $\mathbb H=\{z:\Im z>0\}$, which exists since~$f$ has no zeros in $\mathbb H$. For $z\in \mathbb H$,
\begin{equation}\label{Img'}
\Im (g'(z))=\Im\left(\frac{f'(z)}{f(z)}\right)=\Im\left(\frac{p_0}{z}+a+2bz+\sum_{j\in\N}\left(\frac{p_j}{a_j+z}-\frac{p_j}{a_j}\right)\right)<0.
\end{equation}
Hence, for each $y>0$,
\[
\Im (g(x+iy))=\arg f(x+iy)\; \text{is strictly decreasing with } x, \;\text{for } x\in\R,
\]
so, by the Cauchy-Riemann equations, for each $x\in \R$,
\[
\Re (g(x+iy))=\log |f(x+iy)|\; \text{is strictly increasing with } y, \;\text{for } y>0.
\]
This proves part~(a).

We now deduce that~$g$ is one-one in $\mathbb H$. For distinct $z_1$ and $z_2$ in $\mathbb H$, let $\gamma(t) = (1-t)z_1+t z_2,\;t\in [0,1]$. Then
\begin{eqnarray*}
g(z_2)-g(z_1)&=&\int_{z_1}^{z_2}g'(z)\,dz= \int_0^1g'(\gamma(t))(z_2-z_1)\,dt\\
&=&(z_2-z_1)\left(\int_0^1\Re (g'(\gamma(t))\,dt+i \int_0^1\Im (g'(\gamma(t))\,dt\right).
\end{eqnarray*}
By~\eqref{Img'}, we deduce that $g(z_2)\ne g(z_1)$, as required for part~(b).
\end{proof}

We now give the proof of Theorem~\ref{main1}.

\begin{proof}[Proof of Theorem~\ref{main1}]
We show first that
\begin{equation}
\log M(s^{1+a/2}) > 2\log M(s). \label{2logM}
\end{equation}
In order to show this, we first note that it follows from \eqref{s>} and \eqref{Had} that
\[ \log M(s^{1+a/4}) \ge (1+a/4)\log M(s). \]
Hence, by Lemma~\ref{Beur}, with $r_1=s^{1+a/4}$, $r_2=s^{1+a/2}$ and $\mu=M(s)$,
\begin{equation}
\log M(s^{1+a/2}) > \frac12 s^{a/8}\log \frac{M(s^{1+a/4})}{M(s)} + \log M(s) \ge \left(\frac{a}{8}s^{a/8} +1\right)\log M(s). \label{Beurling}
\end{equation}
It follows from \eqref{s>} that
\[ \frac{a}{8}s^{a/8} = \frac{a}{8}\exp\left(\frac{a}{8}\log s\right) \ge \frac{a}{8}\left(\frac{a}{8}\log s\right) \ge 1, \]
and this together with \eqref{Beurling} implies \eqref{2logM}.

Let $G_1$ denote the component of $\{z: |f(z)|<M(s^{1+a/2})\}$ such that $\gamma\subset G_1$. Since $\gamma$ joins $C(s)$ to $C(s^{1+a})$, the component $G_1$ contains $\{z:|z|<s^{1+a/2}\}$. Hence $G_1$ is symmetric with respect to the real axis, and $\partial G_1$ meets $C(s^{1+a/2})$ and $C(s^{1+a})$. Let $\Gamma \subset \overline{A}(s^{1+a/2},s^{1+a})$ be a subcontinuum of $\gamma$ that meets $C(s^{1+a/2})$ and $C(s^{1+a})$; such a subcontinuum exists by \cite[Lemma~3.3]{RS13}, for example.

Let
\[ \Omega = A(s^{1+a/2},s^{1+a}) \cap \{z: \Im z >0\} \quad \mbox{and} \quad \beta=\partial G_1\cap \overline{\Omega}, \]
and denote by $\Delta$ the collection of all rectifiable arcs in~$\Omega$ that join $\Gamma$ to~$\beta$, and by $\Delta_0$ the collection of all rectifiable arcs in $\Omega$ that join the real interval $[-s^{1+a},-s^{1+a/2}]$ to $[s^{1+a/2},s^{1+a}]$. Then, by the symmetry property of $G_1$, every arc in $\Delta_0$ has a subarc in $\Delta$ and hence, by Lemma~\ref{ext1},
\begin{equation}
\lambda_\Omega(\Delta) \le \lambda_\Omega(\Delta_0) = \frac{2\pi}{a \log s}. \label{upperbound}
\end{equation}
Let $F$ be an analytic branch of $\log f$ on the upper half-plane. Since~$f$ is a real {\tef} of order less than~2 with only real zeros, and hence in the Laguerre--P\'olya class, we deduce by Lemma~\ref{incr} part~(b) that the branch~$F$ is conformal on the upper half-plane. Note that $F$ extends continuously to $\Gamma\cup\beta$ (this may include some points on $\R$, but $f(z)\ne0$ at such points). Thus every element in $F(\Delta)$ is an arc in $F(\Omega)$ that joins $F(\Gamma)$ to $F(\beta)$. Moreover, by Lemma~\ref{ext0},
\[ \lambda_\Omega(\Delta) = \lambda_{F(\Omega)}(F(\Delta)). \]
We next find a lower bound for this extremal length.

Take $z_0, z'_0 \in\Gamma$ such that
\begin{equation}
\theta_0 := \inf_{z\in\Gamma} \Im (F(z)) = \Im (F(z_0)), \quad \theta'_0 := \sup_{z\in\Gamma} \Im (F(z)) = \Im (F(z'_0)) \label{theta}
\end{equation}
and write $\theta=\theta'_0 - \theta_0 = \Delta\!\arg(f(\Gamma);z_0,z'_0)$.
Let $R$ be the rectangle
\[ R= \left\{u+iv : |u|<\log M(s^{1+a/2}), \ \theta_0-\log\frac{M(s^{1+a/2})}{M(s)} < v < \theta'_0+\log\frac{M(s^{1+a/2})}{M(s)} \right\} \]
and let $\rho = \chi_R$ be the characteristic function of~$R$. Now
\begin{align*}
A(F(\Omega),\rho) &= \int\!\!\int_{F(\Omega)}\rho^2\,dxdy \\
&\le \mbox{Area}(R)\\
&= 2\log M(s^{1+a/2})\left(\theta+2\log\frac{M(s^{1+a/2})}{M(s)}\right).
\end{align*}
Since $\beta\subset\partial G_1$ we have that $F(\beta)\subset\{u+iv: u = \log M(s^{1+a/2})\}$. From \eqref{|f|<M} and \eqref{theta} we obtain \[F(\Gamma)\subset\{u+iv : |u|<\log M(s), \ \theta_0\le v \le\theta'_0\}.\]
We deduce that
\[ L(F(\Delta),\rho) = \inf_{\sigma\in F(\Delta)} \int_\sigma \chi_R\,|dz| \ge \log\frac{M(s^{1+a/2})}{M(s)}\,,  \]
because every arc in $F(\Delta)$ joins $F(\Gamma)$ to $F(\beta)$ and hence must have a subarc in~$R$ of length at least $\log(M(s^{1+a/2})/M(s))$.

Therefore
\[ \lambda_{F(\Omega)}(F(\Delta)) \ge \frac{L(F(\Delta),\rho)^2}{A(F(\Omega),\rho)} \ge \frac{\left(\log\frac{M(s^{1+a/2})}{M(s)}\right)^2}{2\log M(s^{1+a/2})\left(\theta+2\log\frac{M(s^{1+a/2})}{M(s)}\right)}. \]
Together with \eqref{upperbound} and the conformal invariance $\lambda_\Omega(\Delta) = \lambda_{F(\Omega)}(F(\Delta))$, this yields
\[ \theta \ge \frac{a\log s \left(\log\frac{M(s^{1+a/2})}{M(s)}\right)^2}{4\pi \log M(s^{1+a/2})} - 2\log\frac{M(s^{1+a/2})}{M(s)}. \]
Using \eqref{2logM} and \eqref{s>} now gives
\begin{align*}
\theta &\ge \frac{a\log s \left(\frac12 \log M(s^{1+a/2})\right)^2}{4\pi \log M(s^{1+a/2})} - 2\log M(s^{1+a/2}) + 2\log M(s) \\
& = \left(\frac{a\log s}{16\pi} - 2\right)\log M(s^{1+a/2}) + 2\log M(s) \\
&\ge \left(\frac{a\log s}{8\pi} - 4\right)\log M(s) + 2\log M(s) \\
&= \left(\frac{a\log s}{10\pi} + \frac{a\log s}{40\pi} - 2\right)\log M(s) \ge \frac{1}{10\pi}\log M(s) \log s^a,
\end{align*}
as required.
\end{proof}

\section{Proof of Theorem~\ref{main2a}}
\setcounter{equation}{0}

The remaining sections of this paper give results that are only needed to deal with real functions with real zeros that have order in the interval $[1/2,1)$.

Let $f$ be a {\tef} of order less than 1 with $f(0) = 1$. In the proof of Theorem~\ref{main2a}, we use some standard results about the following quantities:
\[ N(r) = \int_0^r \frac{n(t)}{t}\,dt\quad\text{and}\quad Q(r) = r \int_r^{\infty} \frac{n(t)}{t^2}\, dt,\]
where $n(r)$ is the number of zeros of $f$ in $\{z: |z| \leq r\}$, counted according to multiplicity.

We note that it follows from Jensen's theorem (see, for example,
\cite[p.125]{T} for a proof) that, for $r>0$,
\begin{equation}\label{J}
\log M(r) \geq N(r).
\end{equation}
Therefore, for $r>0$,
\begin{equation}\label{Mn}
\log M(r) \geq \int_{r/e^{28}}^r \frac{n(r/e^{28})}{t}\,dt = 28 n(r/e^{28})
\end{equation}
and
\begin{equation}\label{nM}
 n(r) \log 3 \leq \int_r^{3r} \frac{n(t)}{t}\,dt \leq N(3r) \leq \log
 M(3r).
\end{equation}

We also use the following results proved in~\cite[Lemma 3.3 and Lemma 3.5]{RS08}. Note that similar results were proved by Cartwright~\cite[page~83]{C} for functions of order zero.

\begin{lemma}\label{L33}
Let $f$ be a transcendental entire function of order less than~$1$
with $f(0) = 1$. Then, for $r > 0$,
\[
 \log M(r) \leq N(r) + Q(r).
\]
\end{lemma}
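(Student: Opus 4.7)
The plan is to represent $f$ by its Hadamard canonical product, bound $\log M(r)$ by a sum of logarithmic terms indexed by the zeros of $f$, convert this sum into an integral against the counting function $n(t)$, and finally split the resulting integral at $t=r$ to recover precisely the two pieces $N(r)$ and $Q(r)$.

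First, since $f$ has order $\rho<1$ and satisfies $f(0)=1$, I would invoke the Hadamard factorization theorem to write
\[
f(z) = \prod_{n}\left(1-\frac{z}{a_n}\right),
\]
where $\{a_n\}$ is the zero sequence of $f$ listed with multiplicity. No exponential pre-factor and no genus-$1$ convergence factors appear, because $\rho<1$ forces the genus to be $0$ and $f(0)=1$ fixes the leading constant. The elementary inequality $\log|1-z/a_n|\le \log(1+r/|a_n|)$ on $|z|=r$ then yields
\[
\log M(r) \le \sum_n \log\!\left(1+\frac{r}{|a_n|}\right).
\]

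Next I would rewrite the right-hand side as an integral. Using the identity
\[
\log\!\left(1+\frac{r}{a}\right) = \int_a^{\infty} \frac{r}{t(t+r)}\, dt \qquad (a,r>0),
\]
together with the observation that $n(t)=\sum_n \chi_{[|a_n|,\infty)}(t)$, an application of Tonelli's theorem gives
\[
\log M(r) \le \int_0^{\infty} \frac{r\,n(t)}{t(t+r)}\, dt.
\]
Splitting this integral at $t=r$ and using the two simple bounds $r/(t+r)\le 1$ on $(0,r)$ and $r/(t+r)\le r/t$ on $(r,\infty)$ produces contributions of at most $\int_0^r n(t)/t\,dt = N(r)$ and $r\int_r^\infty n(t)/t^2\,dt = Q(r)$ respectively, yielding $\log M(r)\le N(r)+Q(r)$.

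There is no substantive obstacle. The only points that deserve a moment's care are the absence of exponential and convergence factors in the Hadamard product (a standard consequence of $\rho<1$ via the comparison of the exponent of convergence with the order) and the finiteness of $Q(r)$ (again guaranteed by $\rho<1$, as can be seen from $n(t) = O(t^{\rho'})$ for any $\rho'\in(\rho,1)$). Both are entirely routine.
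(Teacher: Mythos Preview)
Your proof is correct and is the standard route to this classical inequality. Note, however, that the paper does not actually prove this lemma: it is quoted from \cite[Lemma~3.3]{RS08}, so there is no in-paper argument to compare against. Your approach via the genus-zero Hadamard product, the identity $\log(1+r/a)=\int_a^\infty r/(t(t+r))\,dt$, and the split at $t=r$ is exactly the expected argument and is almost certainly what appears in the cited source.
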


\begin{lemma}\label{L35}
Let $f$ be a transcendental entire function of order less than $1$
with $f(0) = 1$, and let $0 < \eta < 1/4$. Then, for $R>0$,
\[
  \log m(r) > N(R) - (1 + \log(2e/\eta)) Q(R),
\]
for $0 \leq r \leq R/2$ except in a set of intervals, the sum of
whose lengths is at most $2\eta R$.
\end{lemma}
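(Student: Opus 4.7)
The plan is to leverage Lemmas~\ref{L33} and~\ref{L35} to deduce a lower bound for $Q(R)$ of order $\log M(R)$, and then convert this into the desired zero-counting estimate by splitting $Q(R)$ geographically, using the assumed decay of $\log M$ to tame the tail and the standard estimate~\eqref{Mn} to bound $n(R/e^{28})$ from above.

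First I would apply Lemma~\ref{L35} with $\eta = 1/16$. The exceptional set then has total length at most $2\eta R = R/8$, strictly less than the length $R/4$ of the interval $(R/4, R/2)$, so there exists $r^* \in (R/4, R/2)$ at which
\[
\log m(r^*) > N(R) - (1+\log(32e))\,Q(R).
\]
Since $r^* < R$, the hypothesis combined with monotonicity of~$M$ gives $\log m(r^*) \le \tfrac12 \log M(r^*) \le \tfrac12 \log M(R)$, and Lemma~\ref{L33} gives $N(R) \ge \log M(R) - Q(R)$. Rearranging these three inequalities yields
\[
Q(R) > \frac{\log M(R)}{2(2+\log(32e))} > \frac{\log M(R)}{14},
\]
the last inequality holding because $2(2+\log(32e)) = 6 + 10\log 2 < 14$.

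I would next control the tail of $Q(R)$. Writing
\[
Q(R) = R \int_R^{R^{1/(1-\alpha)}} \frac{n(t)}{t^2}\,dt + R \int_{R^{1/(1-\alpha)}}^\infty \frac{n(t)}{t^2}\,dt,
\]
the first integral is at most $n(R^{1/(1-\alpha)})$ by monotonicity of~$n$. For the second, the hypothesis $\log M(r) \le r^\alpha$ for $r \ge 3R^{1/(1-\alpha)}$ combined with~\eqref{nM} gives $n(t) \le (3t)^\alpha/\log 3 \le 3 t^\alpha/\log 3$ for $t \ge R^{1/(1-\alpha)}$; since $\alpha<1$, a direct computation shows that this tail integral is at most $3/((1-\alpha)\log 3) < 3/(1-\alpha)$. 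Therefore
\[
Q(R) < n(R^{1/(1-\alpha)}) + \frac{3}{1-\alpha}.
\]

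Combining the two displays above with the estimate $n(R/e^{28}) \le \log M(R)/28$ from~\eqref{Mn}, I obtain
\[
n(R^{1/(1-\alpha)}) - n(R/e^{28}) > \frac{\log M(R)}{14} - \frac{3}{1-\alpha} - \frac{\log M(R)}{28} = \frac{\log M(R)}{28} - \frac{3}{1-\alpha},
\]
as required. The main point requiring care is the numerical balancing: the value $\eta = 1/16$ has to simultaneously keep the exceptional set in Lemma~\ref{L35} shorter than $R/4$ (so that $(R/4, R/2)$ contains a non-exceptional point) and keep $2(1+\log(2e/\eta)+1)$ below~$14$, so that the gap between $\log M(R)/14$ and $\log M(R)/28$ exactly absorbs the loss from~\eqref{Mn}.
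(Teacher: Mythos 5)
Your argument does not prove the statement in question. The statement is Lemma~\ref{L35}, a Cartwright-type lower bound for the minimum modulus, namely $\log m(r) > N(R) - (1+\log(2e/\eta))\,Q(R)$ for $0\le r\le R/2$ outside a union of intervals of total length at most $2\eta R$. What you have written is instead a proof of Theorem~\ref{main2a}: you import that theorem's hypotheses (the bound $\log m(r)\le \tfrac12\log M(r)$ on $(R/4,R/2)$ and the growth condition $\log M(r)\le r^{\alpha}$ for $r\ge 3R^{1/(1-\alpha)}$), neither of which appears in Lemma~\ref{L35}, and you arrive at the zero-counting conclusion $n(R^{1/(1-\alpha)})-n(R/e^{28})\ge \log M(R)/28 - 3/(1-\alpha)$ rather than at the displayed minimum-modulus inequality. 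Worse, your very first step is to apply Lemma~\ref{L35} with $\eta=1/16$, so read as a proof of Lemma~\ref{L35} the argument is circular: the statement to be proved is used as an ingredient.

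For comparison, the paper does not reprove Lemma~\ref{L35} at all; it quotes it from \cite[Lemma~3.5]{RS08}, remarking only that inspection of that proof shows the restriction to large $R$ can be dropped. A self-contained proof would have to work with the genus-zero canonical product $f(z)=\prod_j(1-z/a_j)$, split $\log|f(z)|$ into the contributions of zeros with $|a_j|\le 2R$ and $|a_j|>2R$, bound the tail in terms of $Q(R)$, and treat the near zeros by a Boutroux--Cartan style exceptional-interval argument, which is where the constant $\log(2e/\eta)$ and the bound $2\eta R$ on the excluded set come from. None of this appears in your proposal. (Your computation is essentially the paper's proof of Theorem~\ref{main2a}, and it is correct as such -- including the numerical balancing with $\eta=1/16$ -- but it answers a different question from the one posed.)
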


In~\cite[Lemma 3.5]{RS08} we stated Lemma~\ref{L35} with the condition that $R$ is sufficiently large but inspection of the proof shows that the result in fact holds for all $R>0$. Taking $\eta=1/16$, we obtain the following corollary of Lemma~\ref{L35}.

\begin{corollary}\label{min}
Let $f$ be a transcendental entire function of order less than $1$
with $f(0) = 1$. Then, for $R>0$,
\[
\log m(r) > N(R) - 6Q(R)
\]
for $0 \leq r \leq R/2$ except in a set of intervals, the sum of whose lengths is at most $R/8$.
\end{corollary}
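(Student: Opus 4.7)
The plan is to derive the corollary as a direct specialization of Lemma~\ref{L35}, by choosing $\eta$ so that both (i)~the length bound $2\eta R$ reduces to $R/8$, and (ii)~the constant $1 + \log(2e/\eta)$ is bounded above by $6$. The only freedom in Lemma~\ref{L35} is the parameter $\eta \in (0, 1/4)$, and once we fix its value the corollary will follow immediately by a numerical comparison and the non-negativity of $Q(R)$.

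First I would take $\eta = 1/16$, which lies in $(0,1/4)$ and satisfies $2\eta R = R/8$, matching the required bound on the exceptional set. With this choice,
\[
1 + \log(2e/\eta) = 1 + \log(32e) = 2 + 5\log 2.
\]
Since $\log 2 < 0.7$, we have $5\log 2 < 3.5$ and hence $1 + \log(2e/\eta) < 5.5 < 6$. Because $n(t) \geq 0$ for all $t$, we have $Q(R) \geq 0$, so
\[
N(R) - (1 + \log(2e/\eta))\, Q(R) \;\geq\; N(R) - 6\, Q(R).
\]
Combining this with the conclusion of Lemma~\ref{L35} (applied with $R$ as in the corollary and $\eta = 1/16$) yields $\log m(r) > N(R) - 6 Q(R)$ for $0 \le r \le R/2$ outside an exceptional set of total length at most $R/8$, which is exactly the statement. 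There is no real obstacle beyond the elementary numerical check $2 + 5\log 2 < 6$.
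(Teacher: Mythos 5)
Your proposal is correct and is exactly the paper's argument: the authors likewise obtain the corollary by applying Lemma~\ref{L35} with $\eta=1/16$, so that $2\eta R=R/8$ and $1+\log(2e/\eta)=2+5\log 2<6$. Nothing further is needed.
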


We are now in a position to prove Theorem~\ref{main2a}.

\begin{proof}[Proof of Theorem~\ref{main2a}]
Let $f$ be a {\tef} with $f(0) = 1$ and suppose that, for some $R > e^{28}$, there exists $0 < \alpha < 1$ such that
\begin{equation}\label{alpha}
 \log M(r) \leq r^{\alpha}, \mbox{ for } r \geq 3R^{1/(1-\alpha)}.
\end{equation}
Recall that Theorem~\ref{main2a} states that, if
\begin{equation}\label{mM}
\log m(r) \leq \frac12\log M(r), \mbox{ for } r \in (R/4,R/2),
\end{equation}
then
\[
n(R^{1/(1-\alpha)}) - n(R/e^{28}) \geq \frac{\log M(R)}{28} - \frac{3}{1 - \alpha}.
\]
In order to prove this, we begin by noting that it follows from~\eqref{alpha},~\eqref{mM} and Corollary~\ref{min} that
\[
\frac{1}{2} \log M(R) \geq N(R) - 6Q(R)
\]
and so
\[
\log M(R) \geq 2N(R) - 12Q(R).
\]
Together with Lemma~\ref{L33}, applied with $r = R$, this implies that
\begin{equation}\label{QM}
Q(R) \geq \frac{1}{14} \log M(R).
\end{equation}

We now note that it follows from~\eqref{nM} together with~\eqref{alpha} that
\begin{eqnarray*}
Q(R) & = & R\int_R^{R^{1/(1-\alpha)}}\frac{n(t)}{t^2}\,dt + R \int_{R^{1/(1-\alpha)}}^{\infty}\frac{n(t)}{t^2}\,dt\\
& \leq & n(R^{1/(1-\alpha)}) + 3^{\alpha} R \int_{R^{1/(1-\alpha)}}^{\infty}\frac{1}{t^{2-\alpha}}\,dt\\
& = & n(R^{1/(1-\alpha)}) + 3^{\alpha} R \left[  \frac{-1}{(1-\alpha)t^{1-\alpha}}\right]_{R^{1/(1-\alpha)}}^{\infty}\\
& \leq & n(R^{1/(1-\alpha)}) + \frac{3}{1-\alpha}.
\end{eqnarray*}

Together with~\eqref{QM} and~\eqref{Mn}, this implies that
\begin{eqnarray*}
n(R^{1/(1-\alpha)}) + \frac{3}{1-\alpha}  \geq  Q(R) & \geq & \frac{1}{28} \log M(R) + \frac{1}{28} \log M(R)\\
& \geq & \frac{1}{28} \log M(R) + n(R/e^{28})
\end{eqnarray*}
and so
\[
n(R^{1/(1-\alpha)}) - n(R/e^{28}) \geq \frac{\log M(R)}{28} - \frac{3}{1 - \alpha},
\]
as claimed.
\end{proof}

\section{Proof of Theorem~\ref{main2}}
\setcounter{equation}{0}
We begin by proving the following result. Recall that $R_0$ denotes the constant in~\eqref{Had}.

\begin{lemma}\label{zeros}
Let $f$ be a real {\tef} of order $\rho <  1$ with only real zeros.
There exist $K_1 = K_1(\rho) > 1$ and $R_1 = R_1(f) \geq R_0$ such that, if
\begin{equation}\label{LR}
 L \geq K_1, \; s^{1/L} \geq R_1,
 \end{equation}
 and $\gamma$ is any continuum in $\{z: \Im z \geq 0\}$ that meets both $C(s^{1/L})$ and $C(s)$ with
\begin{equation}\label{fsmalla}
  |f(z)| < M(s^{1/L}), \mbox{ for } z \in \gamma,
\end{equation}
and
\begin{equation}\label{vsmall}
|f(z')| < \frac{1}{M(s^{1/L^{1/8}})}, \mbox{ for some } z' \in \gamma \cap \{z: s^{1/L^{1/8}} \leq |z| \leq s\},
\end{equation}
then either\\
{\rm (a)} we have
\begin{itemize}
\item $\gamma$ meets $\{z: \Re z = s^{1/L}\}$ and $\{z: \Re z = s^{1/L^{1/8}}/16\}$,
\item there exists $s'' \in (s^{1/L^{1/4}},\Re z')$ with $m(s'') > 1/M(s^{1/L^{1/8}})$,
\item the number of zeros of $f$ in $I = [s^{1/L^{3/4}},s^{1/L^{1/4}}]$ is at least $\log M(s^{1/L^{3/4}})$,
\end{itemize}
or\\
{\rm (b)} we have
\begin{itemize}
\item $\gamma$ meets $\{z: \Re z = -s^{1/L}\}$ and $\{z: \Re z = -s^{1/L^{1/8}}/16\}$,
\item there exists $s'' \in (s^{1/L^{1/4}},-\Re z')$ with $m(s'') > 1/M(s^{1/L^{1/8}})$,
\item the number of zeros of $f$ in $I = [-s^{1/L^{1/4}}, -s^{1/L^{3/4}}]$ is at least $\log M(s^{1/L^{3/4}})$.
\end{itemize}
\end{lemma}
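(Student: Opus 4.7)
The plan is to combine three tools: the Laguerre--P\'olya monotonicity from Lemma~\ref{incr}; Lemma~\ref{Be1} together with Beurling's estimate (Lemma~\ref{Beur}); and the zero-counting Theorem~\ref{main2a}. Since $f$ is real of order $\rho<1<2$ with only real zeros, $f$ lies in the Laguerre--P\'olya class, so Lemma~\ref{incr}(a) says $|f(x+iy)|$ is strictly increasing in $y\ge 0$. Writing $z'=x'+iy'$, this yields
\[ m(|x'|)\le|f(x')|\le|f(z')|<\frac{1}{M(s^{1/L^{1/8}})}. \]
By this same monotonicity, the minimum of $|f|$ on $C(r)$ is attained on the real axis, so $m(r)=\min\{|f(r)|,|f(-r)|\}$. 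Assuming $f(0)\ne 0$ (otherwise factor out the power of $z$ at $0$, which preserves the real-zero and order properties), we have $|f(0)|>1/M(s^{1/L^{1/8}})$ once $R_1$ is large, so any admissible $z'$ has $\Re z'\ne 0$. The conjugation symmetry $f(\overline z)=\overline{f(z)}$ makes cases~(a) and~(b) mirror images under $x\mapsto -x$, so we assume $\Re z'=x'>0$ and aim for~(a).

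To produce the $s''$ of the second bullet in~(a), we apply Lemma~\ref{Be1} with $r=s^{1/L^{1/8}}$, $\delta=1$, and $\lambda=1/16$ (which satisfies $\lambda\le(\delta/(2(1+\delta)))^2$ and, for $R_1$ large, $M(s^{1/L^{1/8}}/16)\ge 1$); this gives some $s''\in(s^{1/L^{1/8}}/16,\,s^{1/L^{1/8}})$ with $m(s'')>1/M(s^{1/L^{1/8}})$. Provided $K_1(\rho)$ is large enough that $s^{1/L^{1/4}}<s^{1/L^{1/8}}/16$, we have $s''>s^{1/L^{1/4}}$. It remains to show $\Re z'\ge s^{1/L^{1/8}}/16$; once this is in hand, $s''<\Re z'$ and, simultaneously, the connectedness of $\Re(\gamma)$---an interval that contains a point of $\Re\le s^{1/L}$ coming from $\gamma\cap C(s^{1/L})$ and the point $\Re z'\ge s^{1/L^{1/8}}/16$---delivers the vertical-line crossings of the first bullet. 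To establish $\Re z'\ge s^{1/L^{1/8}}/16$ we choose, among all points of $\gamma\cap\{s^{1/L^{1/8}}\le|z|\le s\}$ satisfying $|f(z)|<1/M(s^{1/L^{1/8}})$, one with largest positive real part, and argue by contradiction: if this maximal $\Re z'<s^{1/L^{1/8}}/16$, then the monotonicity of $|f|$ in $|y|$ confines the small values of $|f|$ to a narrow real region, and Lemma~\ref{Beur} applied with $\mu=1/M(s^{1/L^{1/8}})$ forces an overgrowth of $M$ incompatible with $\rho<1$, once $K_1(\rho)$ is chosen sufficiently large.

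For the third bullet, we apply Theorem~\ref{main2a} to $\tilde f(z)=f(z)/f(0)$. Choose $\alpha\in(\rho,1)$ so that $\log M(r)\le r^\alpha$ for all sufficiently large $r$, and take $R$ around $s^{1/L^{1/2}}$ so that $R/e^{28}\ge s^{1/L^{3/4}}$ and $R^{1/(1-\alpha)}\le s^{1/L^{1/4}}$; this is possible provided $K_1(\rho)$ is sufficiently large in terms of $\alpha$. The crucial hypothesis $\log m(r)\le\tfrac12\log M(r)$ on $(R/4,R/2)$ is verified via Lemma~\ref{Beur}: the extremely small value $m(x')<1/M(s^{1/L^{1/8}})$ propagates to smallness of $m$ on a sub-interval of positive logarithmic measure containing $(R/4,R/2)$, since otherwise Beurling's inequality would force $M$ to grow too fast for a function of order $\rho<1$. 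Theorem~\ref{main2a} then produces
\[ n(R^{1/(1-\alpha)})-n(R/e^{28})\ge\frac{\log M(R)}{28}-\frac{3}{1-\alpha}\ge\log M(s^{1/L^{3/4}}), \]
which is the desired zero count in $[s^{1/L^{3/4}},s^{1/L^{1/4}}]$.

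The main obstacle will be calibrating $K_1(\rho)$ and $R_1(f)$ so that the three scales $s^{1/L^{1/8}}$, $s^{1/L^{1/4}}$, $s^{1/L^{3/4}}$ appearing in the conclusion align with the three scales $R/e^{28}$, $R/2$, $R^{1/(1-\alpha)}$ in Theorem~\ref{main2a}, and simultaneously ensuring that the pointwise smallness $m(x')<1/M(s^{1/L^{1/8}})$ can be upgraded via Beurling's estimate to the interval smallness $\log m\le\tfrac12\log M$ on $(R/4,R/2)$ demanded by Theorem~\ref{main2a}. Both constraints get absorbed into the dependence of $K_1$ on $\rho$.
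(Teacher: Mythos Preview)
Your argument for the third bullet has a genuine gap. You propose to verify the hypothesis $\log m(r)\le\tfrac12\log M(r)$ of Theorem~\ref{main2a} on $(R/4,R/2)$ by ``propagating via Beurling'' the single small value $|f(x')|<1/M(s^{1/L^{1/8}})$. But Lemma~\ref{Beur} goes the other way: it says that if $m\le\mu$ on a set of large logarithmic measure then $M$ must grow fast. It cannot turn one small value of $m$ into smallness on an interval, and there is no contrapositive that does so either. Moreover, $x'=\Re z'$ lives at scale at least $s^{1/L^{1/8}}/16$, which is far above the scale $R\sim s^{1/L^{1/2}}$ where you need the bound, so the pointwise information at $x'$ carries no direct consequence for $(R/4,R/2)$. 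Crucially, you never use hypothesis~\eqref{fsmalla}, and this is exactly where the interval bound comes from: since $\gamma$ lies in $\{\Im z\ge 0\}$, meets $C(s^{1/L})$, and contains a point with $\Re z'>s''>s^{1/L^{1/4}}$, connectedness forces $\gamma$ to pass \emph{over} every real $x\in[s^{1/L},s^{1/L^{1/4}}]$, and Lemma~\ref{incr}(a) then pushes the bound $|f|<M(s^{1/L})$ from $\gamma$ down to the real axis, giving $|f(x)|\le M(s^{1/L})$ for all such $x$. This is the step you are missing.

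Two further points. First, your Beurling contradiction for $\Re z'\ge s^{1/L^{1/8}}/16$ is unnecessary and unconvincing as stated: once you have $s''$ with $m(s'')>1/M(s^{1/L^{1/8}})$, Lemma~\ref{incr}(a) immediately gives $|f(z)|>1/M(s^{1/L^{1/8}})$ for $|z|\ge s''$ with $|\Re z|\le s''$, so $|\Re z'|>s''$ directly. Second, the paper does not apply Theorem~\ref{main2a} to $f/f(0)$ but to the quotient $g=f/h$, where $h$ collects the factors corresponding to the zeros in $I$; this normalisation $g(0)=1$ is what makes the chain $|g(r)|\le M(s^{1/L},f)/|h(s^{1/L^{3/4}})|\le M(r,g)^{1/2}$ work for $r\in I$, and the final lower bound $\log M(R,g)\ge 28\log M(s^{1/L^{3/4}},f)+O(1)$ is obtained via Lemma~\ref{Be3} (growth of $M$ from $\log m\le\tfrac12\log M$ on a long interval), not directly from Hadamard convexity. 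Without these ingredients your proposed route does not close.
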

\begin{proof}
We first put
\begin{equation}\label{K1cond}
K_1=K_1(\rho)=\max\{\exp(200),1/(1-\alpha)^4\}, \;\;\text{where }\alpha=\frac12(\rho+1),
\end{equation}
and assume throughout the proof that $L\ge K_1$.

We shall choose $R_1=R_1(f)\ge R_0$ so that the various conditions required during the proof hold whenever $s^{1/L} \ge R_1(f)$.

It follows from Lemma~\ref{Be1}, with $\delta = 1$, $\lambda = 1/16$ and $r = s^{1/L^{1/8}}$, that, if $R_1$ is chosen to ensure that, whenever $s^{1/L}\ge R_1$,
\begin{equation}\label{M16}
M(s^{1/L}) \geq 1 \mbox{ and } s^{1/L^{1/8}}/16 > s^{1/L^{1/4}},
\end{equation}
then there exists
\begin{equation}\label{sm}
s'' \in (s^{1/L^{1/8}}/16,s^{1/L^{1/8}}) \subset (s^{1/L^{1/4}}, s^{1/L^{1/8}}) \mbox{ such that } m(s'') > \frac{1}{M(s^{1/L^{1/8}})}.
\end{equation}
It then follows from Lemma~\ref{incr} that we have
\[
|f(z)| > \frac{1}{M(s^{1/L^{1/8}})}, \mbox{ for } |z| \geq s'', \; |\Re z| \leq s''.
\]
Thus the point $z' \in \gamma$ described in~\eqref{vsmall} lies outside $\{z: |\Re z| \leq s''\}$. We assume that $\Re z' > s''$. (With this assumption we end up with case~(a) of the lemma, as the first two bullet points of case~(a) are now satisfied -- otherwise, similar arguments would lead to case~(b).)

Since~$\gamma$ meets both $C(s^{1/L})$ and $\{z:\Re z \geq s''\}$, and $s'' \geq s^{1/L^{1/4}}$, it follows from Lemma~\ref{incr} and~\eqref{fsmalla} that
\begin{equation}\label{upper}
|f(x)| \leq M(s^{1/L}), \mbox{ for } s^{1/L} \leq x \leq s^{1/L^{1/4}}.
\end{equation}
Since~$f$ is real with only real zeros and has order less than 1, we can write
\[
f(z) = cz^{p_0}\prod_{j\in \N}(1 + z/a_j)^{p_j},
\]
where $c \in \R \setminus \{0\}$, $p_0 \in \{0,1,\ldots\}$, $a_j \in \R\setminus \{0\}$ and $p_j \in \{1,2\ldots\}$, for $j \geq 1$, with $0 < |a_1| \leq |a_2| \leq \ldots$.
We now take $s^{1/L^{3/4}} > |a_1|$ and put
\[I = [s^{1/L^{3/4}},s^{1/L^{1/4}}].\]
We write $f(z) = g(z)h(z)$, where
\[
 h(z) = cz^{p_0}(1+z/a_1)^{p_1}\prod_{a_j \in I}(1 + z/a_j)^{p_j}.
\]
Our aim is to apply Theorem~\ref{main2a} to~$g$ in order to show that $f$ has many zeros in~$I$. To do this, we must check that the conditions of Theorem~\ref{main2a} hold. Clearly $g(0)=1$. We first require $s^{1/L}$ to be so large that
\begin{equation}\label{tL}
 s^{1/L} \geq R_0=R_0(f) \quad\mbox{and} \quad 4 \leq M(r,h) \leq 2 |h(r)|, \mbox{ for } r \in I.
\end{equation}
It follows from~\eqref{upper},~\eqref{tL} and~\eqref{Had} that, for $r \in I$, we have
\begin{eqnarray*}
|g(r)| = \frac{|f(r)|}{|h(r)|} & \leq & \frac{M(s^{1/L},f)}{|h(s^{1/L^{3/4}})|} \leq  \frac{M(s^{1/L^{3/4}},f)^{1/2}}{|h(s^{1/L^{3/4}})|}\\
 & \leq & \frac{M(s^{1/L^{3/4}},g)^{1/2}M(s^{1/L^{3/4}},h)^{1/2}}{|h(s^{1/L^{3/4}})|}\\
 & \leq & \frac{2M(s^{1/L^{3/4}},g)^{1/2}M(s^{1/L^{3/4}},h)^{1/2}}{M(s^{1/L^{3/4}},h)}\\
 & < & M(s^{1/L^{3/4}},g)^{1/2} \leq M(r,g)^{1/2}.
\end{eqnarray*}
Thus
\begin{equation}\label{mg}
\log m(r,g) < \frac12\log M(r,g), \mbox{ for } r \in I,
\end{equation}
so the second hypothesis of Theorem~\ref{main2a} is satisfied whenever $(R/4,R/2)\subset I$.

We next recall that $f$ has order $\rho < 1$ and $\alpha =\tfrac12(\rho+1)$, so there exists $r_1 = r_1(f) > 1$ such that
\begin{equation}\label{r1}
\log M(r,f) \leq r^{\alpha} \mbox{ and } |cz^{p_0}(1+z/a_1)^{p_1}| \geq 1, \mbox{ for }  |z| = r \geq r_1.
\end{equation}
We define $R>0$ by $R^{1/(1-\alpha)} = s^{1/L^{1/4}}$ and note that
\begin{equation}\label{tL0}
R = s^{(1- \alpha)/L^{1/4}} \ge s^{1/L^{1/2}}, \;\text{ for } s>0,
\end{equation}
by \eqref{K1cond}, and also impose the following further conditions on~$s^{1/L}$:
\begin{equation}\label{tL1}
s^{1/L^{3/4}} \geq r_1 \mbox{ and } R/e^{28} = s^{(1- \alpha)/L^{1/4}}/ e^{28} > s^{1/L^{3/4}}.
\end{equation}
We note that, if $a_j \in I$ and $|z| \geq 3R^{1/(1-\alpha)} = 3s^{1/L^{1/4}}$, then $|1 + z/a_j| \geq 2$. So it follows from~\eqref{r1} and~\eqref{tL1} that
\[
|h(z)| \geq 1, \mbox{ for } |z| \geq 3R^{1/(1-\alpha)}.
\]
Thus, for $r \geq 3R^{1/(1-\alpha)}$, it follows from~\eqref{r1} and~\eqref{tL1} that
\[
 \log M(r,g) \leq \log M(r,f) \leq r^{\alpha}.
\]
Together with~\eqref{mg} and~\eqref{tL1}, this implies that the hypotheses of Theorem~\ref{main2a} are satisfied for $g$ with $R = s^{(1 - \alpha)/L^{1/4}}$ and so
\[
n(R^{1/(1-\alpha)}) - n(R/e^{28}) \geq \frac{\log M(R,g)}{28} - \frac{3}{1 - \alpha},
\]
where $n(r)$ denotes the number of zeros of $g$ in $\{z: |z| \le r\}$, counted according to multiplicity. Clearly zeros of $g$ are also zeros of $f$ and it follows from the definition of $R$ and~\eqref{tL1} that zeros of $g$ of modulus between $R/e^{28}$ and $R^{1/(1-\alpha)}$ must lie in $I=[s^{1/L^{3/4}}, s^{1/L^{1/4}}]$.

To complete the proof of Lemma~\ref{zeros} it remains to show that
\begin{equation}\label{Mgf}
\frac{\log M(R,g)}{28} - \frac{3}{1 - \alpha} \geq \log M(s^{1/L^{3/4}},f).
\end{equation}
To do this, we first prove that if $r_2=r_2(f)=|a_2|e$, then $\log M(s^{1/L^{3/4}},g) > 1$, provided that $s^{1/L}\ge r_2$. Indeed, in this case, we have $s^{1/L^{3/4}} >|a_2|$, so $a_2\notin I$ and
\begin{eqnarray*}
M(r,g) &\ge& |g(ir)|\\
&=&\left|1+ir/a_2\right|^{p_2}\prod_{j\ge 3,\,a_j\notin I}\left|1+ir/a_j\right|^{p_j}\\
&\ge& \left|1+ir/a_2\right|^{p_2} \ge r/|a_2|,
\end{eqnarray*}
and hence
\begin{equation}\label{Mg1}
\log M(s^{1/L^{3/4}},g)\ge \log\left(s^{1/L^{3/4}}/|a_2|\right) >1,
\end{equation}
as required.

We now note that it follows from~\eqref{tL1},~\eqref{mg} and Lemma~\ref{Be3} (with $\delta=1/2$, $r=s^{1/L^{3/4}}$ and $r^k=s^{1/L^{1/2}}$), together with~\eqref{tL0},~\eqref{Mg1},~\eqref{tL} and~\eqref{K1cond}, that
\begin{eqnarray*}
\log M(R,g) & = & \log M(s^{(1-\alpha)/L^{1/4}},g) \ge \log M(s^{1/L^{1/2}},g) \\
& \ge & \left(s^{1/L^{3/4}}\right)^{(\log L^{1/4}/ 32 \log 2)} \log M(s^{1/L^{3/4}},g)\\
& > &  s^{\log L/(100L^{3/4})}\\
& \ge & s^{2/L^{3/4}}.
\end{eqnarray*}

Finally we note that it follows from~\eqref{r1} and~\eqref{tL1} that
\[
 \log M(s^{1/L^{3/4}},f) < s^{1/L^{3/4}}
\]
and so~\eqref{Mgf} holds provided that $s^{1/L}$ is so large that we also have
\[
\frac{1}{28}  s^{2/L^{3/4}} - \frac{3}{1- \alpha} \geq s^{1/L^{3/4}}. \qedhere
\]
\end{proof}

Finally we use Lemma~\ref{zeros} to prove Theorem~\ref{main2}.

\begin{proof}[Proof of Theorem~\ref{main2}]
Recall that Theorem~\ref{main2} states that if the function~$f$, the continuum~$\gamma$ and the point~$z'$ satisfy the conditions of Lemma~\ref{zeros}, with
\begin{equation}\label{sLR0}
L\ge K_1 = K_1(\rho) > 1\;\text{ and }\; s^{1/L}\ge R_1 = R_1(f) \geq R_0,
\end{equation}
then there exist a continuum $\Gamma\subset \gamma\cap\overline A(s^{1/L},s)$ and $z_0, z'_0\in\Gamma$ such that
\begin{equation}\label{arg}
\Delta\!\arg(f(\Gamma);z_0,z'_0) \geq  \log M(s^{1/L})\,.
\end{equation}
Without loss of generality we can assume that the continuum $\gamma$ lies entirely in $\{z:\Im z>0\}$.

We assume that case~(a) of Lemma~\ref{zeros} holds and let $s''$ denote the point described there; the proof if case~(b) holds is similar. We also take the constants $K_1=K_1(\rho)$ and $R_1=R_1(f)$ to satisfy the conditions in Lemma~\ref{zeros}, and also one further condition below.

In order to prove the existence of a continuum $\Gamma$ satisfying~\eqref{arg}, we consider the pre-images under $f$ of the positive real axis that lie in the upper half-plane and originate at the zeros of $f$ in the interval $I$ defined in Lemma~\ref{zeros} case~(a). By Lemma~\ref{zeros}, there are at least $\log M(s^{1/L^{3/4}})$ such zeros, and hence at least $\tfrac12\log M(s^{1/L^{3/4}})$ corresponding pre-images of the positive real axis. We will show that at least $\tfrac14\log M(s^{1/L^{3/4}})$ of these pre-images must meet $\gamma\cap\overline A(s^{1/L},s)$.

Consider a simply connected domain $U_I$ that is bounded by appropriate parts of~$\gamma$, the positive real axis, the line $C_1 = \{z: \Re z = \Re z'\}$ and the circle $C(s^{1/L})$. We note that $I\subset \partial U_I$. Also, the pre-images defined above are unbounded and so each has a first point in $\{z:\Im z>0\}$ at which it crosses the boundary of $U_I$. It is sufficient to show that at least $\tfrac14\log M(s^{1/L^{3/4}})$ of these points belong to~$\gamma$.

We begin by noting that these first crossing points cannot lie in $C_1$ since each of the pre-images would have to cross $\{z: |z| = s''\}$ before crossing $C_1$ and this is impossible because, by Lemma~\ref{zeros},
\[
m(s'') > \frac{1}{M(s^{1/L^{1/8}})},
\]
whereas, by \eqref{vsmall} and Lemma~\ref{incr} part~(a),
\[
|f(z)| < \frac{1}{M(s^{1/L^{1/8}})},\; \mbox{ for } z \in C_1 \cap \overline{U_I}.
\]

We now suppose that at least $\tfrac14\log M(s^{1/L^{3/4}})$ of the first crossing points lie in $C(s^{1/L})$ and use Theorem~\ref{long-thin} to show that this leads to a contradiction. We first note that, provided we choose $R_1$ sufficiently large to ensure that
\begin{equation}\label{sL8}
 s^{1/L^{7/8} - 1/L} \geq 2(16)^2,
\end{equation}
we can apply Lemma~\ref{Be1} twice, first with $\delta = 1$, $\lambda = 1/16$ and $r=s^{1/L^{7/8}}$ and then with $\delta = 1$, $\lambda = 1/16$ and $r=s^{1/L^{7/8}}/32$, to deduce that there exist $s_1, s_2$ with
\begin{equation}\label{s1s2}
s^{1/L^{7/8}} \geq s_2 \geq 2s_1 \geq 2s^{1/L} \mbox{ such that } m(s_i) > \frac{1}{M(s^{1/L^{7/8}})}, \mbox{ for } i = 1,2.
\end{equation}

Thus, if more than $\tfrac14\log M(s^{1/L^{3/4}})$ of the first crossing points defined above lie in $C(s^{1/L})$, then there exists a quadrilateral $Q \subset U_I$, bounded by curves $\alpha, \alpha'$, which are arcs of the circles $C(s_1)$ and $C(s_2)$, and by curves $\beta, \beta'$ contained in two of the pre-images defined above, such that, for a suitable branch $F$ of $\log f$, we have that~$F$ is conformal in $Q$, that
\begin{equation}\label{beta}
\Im (F(z)) = 0, \mbox{ for } z \in \beta,
\end{equation}
\begin{equation}\label{beta1}
\Im (F(z)) \ge \tfrac12\pi\log M(s^{1/L^{3/4}}), \mbox{ for } z \in \beta',
\end{equation}
and, by \eqref{s1s2} and \eqref{fsmalla} together with Lemma~\ref{incr} part~(a), that
\begin{equation}\label{Q}
 -\log M(s^{1/L^{7/8}}) \leq \Re (F(z)) \leq \log M(s^{1/L}), \mbox{ for } z \in \alpha\cup\alpha'.
\end{equation}
We now consider the quadrilateral $\log Q$, where $\log$ is the principal branch of the logarithm, which has sides $\log \alpha$, $\log \beta$, $\log \alpha'$ and $\log \beta'$. This quadrilateral satisfies the hypotheses of Theorem~\ref{long-thin} with $\phi(z)=F(e^z)=\log f(e^z)$ and with
\[
a=\log 2,\quad b=\pi,\quad A=2\log M(s^{1/L^{7/8}}),\quad B=\tfrac12 \pi\log M(s^{1/L^{3/4}}),
\]
by \eqref{s1s2}, \eqref{beta}, \eqref{beta1} and \eqref{Q}. It follows from Theorem~\ref{long-thin} that
\[
\frac{\log 2}{\pi}\le \frac{2\log M(s^{1/L^{7/8}})}{\tfrac12 \pi\log M(s^{1/L^{3/4}})}=\frac{4\log M(s^{1/L^{7/8}})}{\pi\log M(s^{1/L^{3/4}})}\leq \frac{4}{\pi L^{1/8}},
\]
by~\eqref{Had}, since $s^{1/L^{7/8}} \geq R_0$ by~\eqref{sLR0}.
However, $L^{1/8} > 4/\log 2$, by~\eqref{K1cond}, so this is a contradiction.

To conclude, if $K_1$ and $R_1$ are chosen so as to satisfy Lemma~\ref{zeros}, and also so that~\eqref{sL8} holds, then at most $\tfrac14\log M(s^{1/L^{3/4}})$ pre-images of the positive real axis defined above leave $U_I$ by crossing $C(s^{1/L})$ and none leave by crossing $C_1$, so at least $\tfrac14\log M(s^{1/L^{3/4}})$ must leave by crossing $\gamma$. Thus there exist a continuum $\Gamma\subset \gamma\cap\overline A(s^{1/L},s)$ and $z_0, z'_0\in\Gamma$ such that
\[
\Delta\!\arg(f(\Gamma);z_0,z'_0) \geq \tfrac12\pi \log M(s^{1/L^{3/4}}) > \log M(s^{1/L})\,.
\]
This completes the proof of Theorem~\ref{main2}.
\end{proof}

\end{document}